\theoremstyle{plain}
\newtheorem{theorem}{Theorem}[section]
\newtheorem{corollary}[theorem]{Corollary}
\newtheorem{lemma}[theorem]{Lemma}
\theoremstyle{definition}
\newtheorem{definition}[theorem]{Definition}
\newtheorem{example}[theorem]{Example}
\newtheorem{notation}[theorem]{Notation}
\newtheorem{remark}[theorem]{Remark}
\theoremstyle{remark}
\newcommand\ZZ{\mathbb{Z}}
\newcommand\op{\operatorname}
\newcommand\R{\mathbb{R}}
\newcommand\Rplus{\mathbb{R}_{>0}}
\def\SS{S}
\def\CC{\mathcal C}
\def\RR{\mathcal R}
\newcommand{\supp}{\op{supp}}
\newcommand{\ra}{\rightarrow}
\newcommand{\rpath}{\rightarrow^*}
\newcommand{\notrpath}{\not\rpath}
\newcommand{\dra}{\ra}
\newenvironment{bullets}%
        {\begin{list}
                {\noindent\makebox[0mm][r]{$\bullet$}}
                {\leftmargin=5.5ex \usecounter{enumi}
      \topsep=1.5mm \itemsep=-.75ex}
        }
        {\end{list}}
\begin{document}
\title{Autocatalysis in Reaction Networks.}
\author[1]{Abhishek Deshpande\thanks{deshabhi123@gmail.com}}
\author[2]{Manoj Gopalkrishnan\thanks{manojg@tifr.res.in}}
\affil[1]{Center for Computational Natural Sciences and Bioinformatics, International Institute of Information Technology, Gachibowli, Hyderabad, India.}
\affil[2]{School of Technology and Computer Science\\ Tata Institute of Fundamental Research, Mumbai, India.}
\date{6 August 2014}
\maketitle
\begin{abstract}
The persistence conjecture is a long-standing open problem in chemical reaction network theory. It concerns the behavior of solutions to coupled ODE systems that arise from applying mass-action kinetics to a network of chemical reactions. The idea is that if all reactions are reversible in a weak sense, then no species can go extinct. A notion that has been found useful in thinking about persistence is that of ``critical siphon.'' We explore the combinatorics of critical siphons, with a view towards the persistence conjecture. We introduce the notions of ``drainable'' and ``self-replicable'' (or autocatalytic) siphons. We show that:	 every minimal critical siphon is either drainable or self-replicable; reaction networks without drainable siphons are persistent; and non-autocatalytic weakly-reversible networks are persistent. Our results clarify that the difficulties in proving the persistence conjecture are essentially due to competition between drainable and self-replicable siphons.
\end{abstract}

\section{Introduction}
Following \cite[Example~6.3]{gopalkrishnan}, consider the reaction network in the species $X$ and $Y$ with the reactions
\begin{align*}
Y & \xrightleftharpoons[k_2]{\,k_1\,} 2X, &&2Y \xrightleftharpoons[k_4]{\,k_3\,} X
\end{align*}
Let $x(t),y(t)$ represent concentrations at time $t$ of the species $X,Y$ respectively. Then \textbf{mass-action kinetics} is described by the system of ordinary differential equations
\begin{align*}
\left(\begin{array}{c}
\dot{x}\
\\\dot{y}\end{array}\right) =
\left(\begin{array}{c}
2\
\\-1\end{array}\right) k_1 y(t) +  \left(\begin{array}{c}
-2\
\\1\end{array}\right)k_2x(t)^2 + \left(\begin{array}{c}
1\
\\-2\end{array}\right)k_3 y(t)^2 +\left(\begin{array}{c}
-1\
\\2\end{array}\right) k_4 x(t)
\end{align*}
Does there exists a trajectory $(x(t),y(t))$ originating in the positive orthant (i.e., $(x(0),y(0))\in \R^2_{>0}$) that can get arbitrarily close to the boundary $\partial \R^2_{\geq 0}=\R^2_{\geq 0}\setminus \R^2_{>0}$ as time $t\to\infty$? That is, for each $\epsilon > 0$, does there exist a time $t_\epsilon$ such that the distance of $(x(t_\epsilon), y(t_\epsilon))$ from $\partial \R^2_{\geq 0}$ is less than $\epsilon$? The \textbf{persistence conjecture}~\cite{Feinberg89} asserts that so long as the reaction network is \emph{weakly-reversible}~(Definition~\ref{def:rnprop}), this can not happen. This conjecture has a history going back to 1974~\cite{horn74dynamics}. It remains open, even when all reactions are reversible, \emph{even for as few as $3$ species}! The $2$ species case was settled only very recently~\cite{CNP,Pantea}.

The notion of \textbf{critical siphon}~\cite{Angeli} provides a powerful and easy combinatorial way of proving persistence for a certain subclass of networks. The idea is that the existence of a trajectory violating persistence has certain combinatorial implications, i.e., the existence of critical siphons. Conversely, the absence of critical siphons --- which can be combinatorially checked --- has dynamical implications, i.e., persistence.

For example, consider the following network in the species $X$ and $Y$ with reactions given by
\begin{align*}
0\rightleftharpoons X + Y \rightleftharpoons 2X + Y \rightleftharpoons X + 2Y \rightleftharpoons 3X + 4Y
\end{align*}
This is persistent. The idea is that if $X$ were ever at zero concentration, it would be produced from nothing, and similarly for $Y$. Only the first reaction $0\rightleftharpoons X+Y$ matters for this analysis. The other reactions can be replaced arbitrarily, and so long as they remain reversible, we can prove persistence. The notion of \emph{siphon} helps to make this idea into a proof. A siphon is a set of species that, once absent, remain absent. It turns out that reaction networks without siphons are persistent.

Another example is $X\rightleftharpoons Y$. Here if $X$ and $Y$ are both absent, they remain absent, so $\{X,Y\}$ is a siphon. However, there is a conservation law: $x(t) + y(t)$ is time-invariant, because whenever an $X$ is destroyed, a $Y$ is created, and vice versa. Therefore, if the initial conditions are $(x(0),y(0))\in\R^2_{>0}$ then the point $(0,0)$ where both $X$ and $Y$ are absent is unreachable. The idea is that if there is a positive conservation law whose support is contained in a siphon then that siphon can not be drained. Hence, among siphons, the challenge to persistence comes only from \emph{critical} siphons, i.e., siphons that do not contain the support of a positive conservation law. The example with reactions $Y\rightleftharpoons 2X$ and $2Y\rightleftharpoons X$ has the critical siphon $\{X,Y\}$. So, though it is persistent, showing this requires a different argument that proceeds by exhibiting a Lyapunov function~\cite{CNP,geometricgac}.

A different approach to the persistence conjecture is to try to identify a subclass of weakly-reversible systems which are somehow \emph{physical}, and exploit this extra structure to obtain a proof for the persistence conjecture. This program was initiated in \cite{adleman-2008} and \cite{gnacadja}, where the notion of systems where every species is made up of \emph{atoms} in a unique way was made precise, and it was shown that atomic systems have no critical siphons, and hence are persistent. Later, in \cite{gopalkrishnan}, this result was extended to all \emph{saturated} or non-catalytic systems, a larger class of systems properly containing the atomic systems.

The biological motivation for our work comes from Synthetic Biology, specifically from the DNA Molecular Programming community~\cite{SeesawQianWinfree, soloveichik2010dna,phillips2009programming}, where reaction networks are being viewed as programming languages for the synthesis of desired behaviour. One would like to have a programming language that is natural with respect to chemical reaction dynamics, so that there are meaningful ways to separately test, and then compose, different subsystems. The Petri net view of reaction networks makes a connection with process algebras, and connects to work on concurrent programming languages. Our work may be viewed as setting down a mathematical bridge between these two different areas, by making explicit the combinatorial foundations of reaction network notions like catalysis, self-replication, and persistence. We hope to exploit these connections more in future to describe ways of programming chemical reaction networks that will take advantage of advances in concurrent programming language design.

Our contributions in this paper are the following:
\begin{bullets}

\item We give an explicit combinatorial characterization of the minimal critical siphons. Specifically, we introduce the notions of \textbf{drainable} and \textbf{self-replicable} sets in Definition~\ref{def:siphon}, and prove in Theorem~\ref{minimal-siphon} that every minimal critical siphon is either drainable or self-replicable.

\item Conjecture~1 from \cite{gopalkrishnan} asserted a link between a certain notion of autocatalysis and critical siphons. We provide a counterexample~(Example~\ref{ex:counter}) to this conjecture. Our notion of self-replicable siphons captures a more nuanced notion of autocatalysis. With this notion, we show in the spirit of \cite[Conjecture~1]{gopalkrishnan} that the weakly-reversible networks that have critical siphons are precisely the autocatalytic systems~(Theorem~\ref{minimal-siphon}). In particular, all non-autocatalytic (i.e., without self-replicable siphons) weakly-reversible networks are persistent. As a special case, in Section~\ref{sec:catalysis} we obtain a combinatorial proof for the persistence of non-catalytic networks, the main result in \cite{gopalkrishnan} which was originally proved using algebraic geometric methods.

\item Angeli et al.~\cite{Angeli} have shown that conservative reaction networks without critical siphons are persistent. We show that reaction networks without drainable siphons are persistent~(Theorem~\ref{thm:persistence}). Since networks with drainable siphons are a proper subclass of networks with critical siphons~(Theorem~\ref{minimal-siphon}), our result sharpens Angeli's result, and brings out that the obstruction to proving the persistence conjecture comes from a competition between the drainable and self-replicable natures of a siphon~(Remark~\ref{Rmk:compete}).

\item In Section~\ref{sec:dm}, towards proving Theorem~\ref{minimal-siphon}, we prove the Convex Rank-Nullity theorem (Theorem~\ref{rank-nullity}), a trichotomy result for matrices that have a sign pattern with all diagonal terms are negative, and all off-diagonal terms positive. Though this result is very suggestive of Farkas' lemma and its variants, to our best knowledge it has neither appeared before, nor is it an immediate corollary of Farkas' lemma~(Remark~\ref{rmk:notfarkas}).

\item All our results are in the setting of \emph{positive} reaction networks (Definition~\ref{def:rn}). This means our results hold for reactions like $0.3 X + 2.14 Y \to 1.1 Z$ with fractional stoichiometric coefficients.

\item We introduce the notion of a \emph{critical} set (Definition~\ref{def:siphon}), so that a critical siphon is a set of species which is both critical and a siphon. Together with the notion of \emph{critical point}~(Definition~\ref{def:critpt}), this decoupling of the two notions simplifies the analysis~(Theorem~\ref{critical-equivalence}), and allows us to obtain slightly stronger results~(Theorem~\ref{minimal-siphon}).
\end{bullets}

Figure~\ref{fig:1} provides a summary of all the results at a glance.

\begin{figure}[ht!]
\caption{\label{fig:1}Summary of results.}
\begin{center}
\[
\includegraphics[scale=.65]{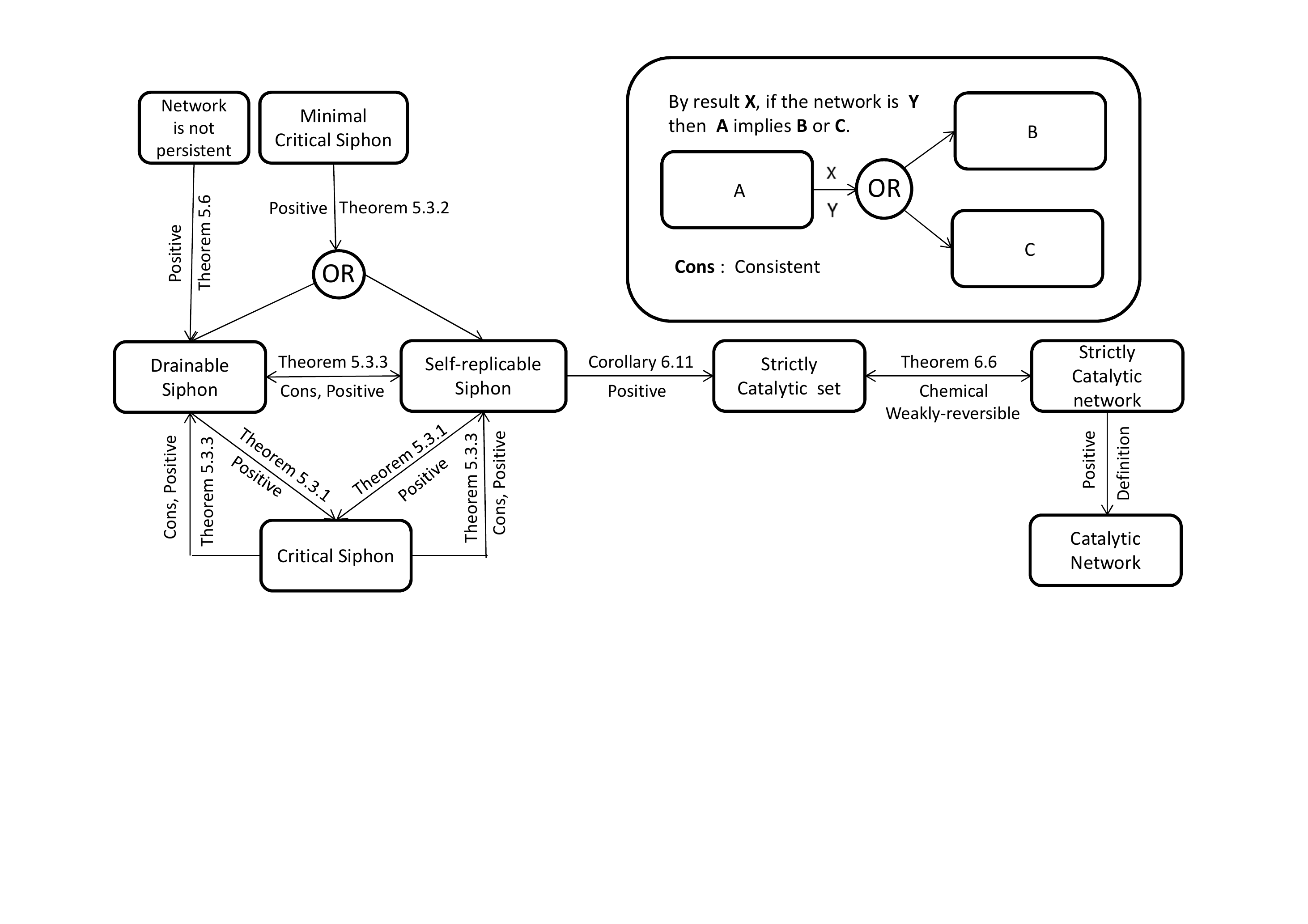}
\]
\end{center}
\end{figure}

\section{Preliminaries}
\begin{notation}
Let $z,z'\in\R^n$ be two vectors.
\begin{enumerate}
\item $\min(z, z')$ is the vector whose $i$'th coordinate is $\min(z_i,z'_i)$ for all $i\in\{1,2,\dots,n\}$.
\item $\lceil z \rceil$ is the vector whose $i$'th coordinate is $\lceil z_i\rceil$ for all $i\in\{1,2,\dots,n\}$.
\item $\supp(z):= \{ i \mid z_i \neq 0\}$.
\end{enumerate}
\end{notation}

\begin{definition}\label{def:cone}
For all vectors ${v_1,v_2,\dots, v_k\in\R^n}$:
\begin{enumerate}
\item The {\em convex polyhedral cone}
$\op{Cone}\{v_1,v_2,\dots,v_k\}:= \left\{\displaystyle\sum_{i=1}^k a_i v_i \mid a_1,a_2,\dots,a_k \geq 0\right\}$.
\item $\op{Span}\{v_1,v_2,\dots,v_k\}:= \left\{\displaystyle\sum_{i=1}^k a_i v_i \mid a_1,a_2,\dots,a_k \in\R\right\}$.
\end{enumerate}
\end{definition}

\begin{definition}[Reaction Network]\label{def:rn}
For every finite set $\SS$:
\begin{enumerate}
\item An $\SS$-{\em complex} is a vector $y\in \R^\SS$.
\item A complex $y\in\R^\SS$ is
  \begin{enumerate}
  \item {\em Positive} iff $y\in\R^\SS_{\geq 0}$.
  \item {\em Integer} iff $y\in\mathbb{Z}^\SS$.
  \item {\em Chemical} iff $y\in\mathbb{N}^\SS$.
  \end{enumerate}
\item A {\em reaction} is a pair $(y,y')$ of complexes, written $y\ra y'$ with {\em reactant} $y$ and {\em product} $y'$.
\item A reaction $y\ra y'$ is {\em positive} (respectively, {\em integer, chemical}) iff both $y,y'$ are positive (respectively, integer, chemical).
\end{enumerate}
A {\em reaction network} is a pair $(\SS,\RR)$ where $\SS$ is a finite set denoting reacting species, and $\RR$ is a finite set of reactions. The {\em reaction graph} of the reaction network $(\SS,\RR)$ is the directed graph with vertex set $\CC:=\{y,y'\mid y\to y' \in\RR\}$ and edge set $\RR$.
\end{definition}

\begin{remark}
Conventionally, a reaction network is defined as a triple $(\SS,\CC,\RR)$ where $\CC$ is the set of complexes that occur either as reactants or as products in some reaction in $\RR$. Explicitly providing the set $\CC$ is redundant, because the complexes can be recovered from the set of reactions. Therefore, we have found it more economical to depart from convention, and define a reaction network as a two-tuple of species and reactions.
\end{remark}

\begin{notation}
Let $G=(\SS,\RR)$ be a reaction network. A reaction $y\ra y'$ is a {\em $G$-reaction} iff $y\ra y'\in\RR$.
\end{notation}

\begin{definition}
Let $G=(\SS,\RR)$ be a reaction network. Then
\begin{enumerate}
\item The {\em stoichiometric subspace} $H\subseteq\R^\SS$ of $G$ is the linear span of the reaction vectors $\{ y' - y \mid y\ra y'\in\RR\}$.
\item A $G$-{\em conservation law} is a linear polynomial $\sum_{i\in\SS} w_i x_i = w\cdot x$, where the $x_i$'s are indeterminates, and $w\in\R^\SS$ is perpendicular to $H$.
\item A $G$-conservation law $w.x$ is {\em positive} iff $w\in\R^\SS_{\geq 0}\setminus\{0\}$.
\end{enumerate}
\end{definition}

\begin{remark}\label{rem:conslaw}
It is well-known and easily verified that $G$-conservation laws are invariant to the application of $G$-reactions.
That is, if $y\rightarrow y'$ is a $G$-reaction and $\sum_i w_i x_i$ is a $G$-conservation law  then $\sum_{i\in\SS} w_i x_i = \sum_{i\in\SS}w_i (x_i + \theta (y'_i - y_i))$ for all $\theta\in\R$. Indeed, this is but a special case of the observation that $\sum_{i\in\SS} w_i x_i = \sum_{i\in\SS}w_i (x_i + \theta \,v_i)$for all vectors $v$ in the stoichiometric subspace $H$ of $G$.
\end{remark}
\begin{definition}
Let $G=(\SS,\RR)$ be a reaction network. Then the {\em stoichiometric matrix} $\Gamma$  of $G$ is the matrix whose rows are the reaction vectors $\{ y' - y \mid y\ra y'\in\RR\}$. For all $T\subseteq\SS$, the {\em stoichiometric submatrix $\Gamma_T$} is the matrix obtained from $\Gamma$ by keeping only those columns that correspond to species in $T$.
\end{definition}

\begin{definition}\label{def:rnprop}
A reaction network $G$ is:
\begin{enumerate}
\item {\em Positive}\label{def:pos} (respectively {\em integer, chemical}) if each $G$-reaction is positive (respectively integer, chemical).
\item {\em Reversible} if the reaction graph is undirected.
\item {\em Weakly-reversible} if in the reaction graph, every reaction belongs to a cycle.
\item {\em Consistent}\label{cons} if there exists a strictly positive vector $v$ such that $v\cdot\Gamma = 0$
\end{enumerate}
\end{definition}

\begin{remark}\label{wr-cons}
The notion of \emph{consistent} is well-known from the Petri net literature~\cite{petri}. It is also well-known~\cite{Angeli} that every weakly-reversible reaction network is consistent. For a reaction network consisting of a single cycle, this is obvious by taking $v$ to be the vector of all $1$'s. Since a weakly-reversible network can be expressed as a combination of cycles, the result follows for an arbitrary weakly-reversible network.
\end{remark}

\begin{example}
$2X + 3Y$ is a complex, represented by the vector $(2,3)$ in the space $\R^{\{X,Y\}}\cong\R^2$ of complexes. It is a positive complex because $2,3\geq 0$. It is an integer complex because $2,3$ are integers. Finally, it is a chemical complex because $2,3$ are positive integers.

The reactions $X \ra 2Y$ and $2X\ra Y$ are chemical reactions, where the reaction notation is understood as shorthand to represent the pairs of vectors $( (1,0),(0,2))$ and $( (2,0), (0,1))$. The corresponding reaction network is $G=(\{X,Y\},\{X\ra 2Y, 2X\ra Y \})$.
\end{example}

\begin{definition}
Let $G=(\SS,\RR)$ be a reaction network. For all reactions $y\ra y'$ and positive complexes $w\in\R^\SS_{\geq 0}$, the {\em dilution} of $y\ra y'$ by $w$ is the reaction $y+w\ra y'+w$. A reaction $z\ra z'$ is a $G$-{\em dilution} iff there exist a positive complex $w\in\R^\SS_{\geq 0}$ and a $G$-reaction $y\ra y'$ such that $z\ra z'$ is the dilution of $y\ra y'$ by $w$.
\end{definition}

Dilutions of reactions have been defined by Cardelli~\cite{cardelliStrand2011}.

\begin{notation}
Let $G=(\SS,\RR)$ be a reaction network, and let $y,y'\in\R^\SS$ be complexes. One says that there is a $G$-{\em reaction pathway} $y\rpath_G y'$ iff there exists a sequence ${y=y(0)\dra y(1)\dra\dots \dra y(k)=y'}$ of $G$-dilutions.
\end{notation}

Clearly, $G$-dilutions need not be $G$-reactions. Such dilutions describe the effect of applying a $G$-reaction $y\ra y'$ to a population $y+w$ that contains all the reactants required for the reaction to fire, to obtain the population $y' + w$.

A $G$-reaction pathway corresponds to the idea of reachability: given a population $z$ in a test-tube, and a reaction network, what other populations are reachable by applying $G$-reactions? The reachable populations are precisely the populations $z'$ such that $z\rpath_G z'$. This notion of reachability has been considered by other authors~\cite{soloveichikComputation2008, Chen2012,cardelliStrand2011}.

\begin{lemma}\label{lem:reactionpathway}
Let $G$ be a reaction network, and let $z\rpath_G z'$ be a $G$-reaction pathway. Then $z'- z$ is in the positive span $H_+ = \op{cone}\{y'- y \mid y\ra y'\in\RR\}$ of the reactions of $G$. In particular, for every $\theta\in\R$, for every $G$-reaction pathway $y\rpath_G y'$, we have $w\cdot (x + \theta(y'-y)) = w\cdot x$ for every $w$ perpendicular to the stoichiometric subspace $H$.
\end{lemma}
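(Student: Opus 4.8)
The plan is to unwind the definition of a $G$-reaction pathway into a telescoping sum and read off the claim. First I would write the pathway as a chain $z = y(0)\dra y(1)\dra\dots\dra y(k)=z'$ of $G$-dilutions. For each index $i$, by the definition of $G$-dilution there exist a $G$-reaction $y_i\ra y'_i$ and a positive complex $w_i\in\R^\SS_{\geq 0}$ with $y(i) = y_i + w_i$ and $y(i+1) = y'_i + w_i$; hence the increment $y(i+1)-y(i) = y'_i - y_i$ is exactly one of the generating vectors of $H_+$. Summing over $i$ telescopes to $z'-z = \sum_{i=0}^{k-1}\bigl(y'_i - y_i\bigr)$, which is a conical combination (in fact with unit coefficients, and after collecting repeated reaction vectors, with nonnegative integer coefficients) of reaction vectors, so $z'-z\in H_+$. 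The degenerate case $k=0$ gives $z'-z = 0$, which lies in $H_+$ as well.

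For the "in particular" clause, I would observe that $H_+\subseteq H$, since every generator $y'-y$ of the cone $H_+$ lies in the span $H$ by definition of the stoichiometric subspace. Thus, given any $G$-reaction pathway $y\rpath_G y'$, the first part of the lemma yields $y'-y\in H_+\subseteq H$. If $w$ is perpendicular to $H$ then $w\cdot(y'-y) = 0$, so by linearity $w\cdot\bigl(x + \theta(y'-y)\bigr) = w\cdot x + \theta\,w\cdot(y'-y) = w\cdot x$ for every $\theta\in\R$, as required.

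I do not anticipate a genuine obstacle here: the entire content is bookkeeping with the definitions of dilution, reaction pathway, and the (positive) span of the reaction vectors. The only points that merit a word of care are the length-zero pathway (handled above) and the remark that grouping together multiple occurrences of the same reaction vector still produces an element of $\op{cone}\{y'-y\mid y\ra y'\in\RR\}$, since nonnegative integer coefficients are in particular nonnegative real coefficients.
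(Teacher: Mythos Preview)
Your proposal is correct and follows essentially the same approach as the paper: write the pathway as a chain of dilutions, telescope, and observe that each increment is a reaction vector (hence a generator of $H_+$). If anything, your argument is slightly more explicit than the paper's, which simply notes that dilution increments lie in $H$ (meaning $H_+$) and then telescopes; for the ``in particular'' clause the paper appeals to the earlier remark on conservation laws, which is exactly your observation that $H_+\subseteq H$ and $w\perp H$ force $w\cdot(y'-y)=0$.
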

\begin{proof}
To see that $z'- z \in H_+$, first note that for $G$-dilutions $x\ra x'$ we have $x'-x\in H$. Since reaction pathways are formed by a chain $z = y(0)\ra y(1)\ra y(2) \dots \ra y(k) = z'$ of dilutions, we have $z'- z = y(k) - y(0) = y(k) - y(k-1) + y(k-1) - y(k-2) + \dots + y(1) - y(0)$, which is in $H_+$. The invariance of $w\cdot x$ along $z\rpath_G z'$ now follows from Remark~\ref{rem:conslaw}.
\end{proof}

\begin{example}\label{ex:x2y}
Let $G=(\{X,Y\},\{X\ra 2Y, 2Y\ra X, 2X\ra Y, Y\ra 2X \})$. Then the reaction $2X \ra X + 2Y$ is not a $G$-reaction. However, it is a dilution by $X$ of the $G$-reaction $X \ra 2Y$. The sequence of $G$-dilutions
\[
2X \ra X + 2Y \ra 3X + Y \ra 5X
\]
where the second and third reactions are dilutions of $Y\ra 2X$, yields a reaction pathway $2X \rpath_G 5X$.
\end{example}

\begin{definition}\label{def:opp_rn}
Let $G=(\SS,\RR)$ be a reaction network. The {\em opposite reaction network} $G^{op}$ of $G$ is the reaction network with species $\SS$ and reactions $\{ y' \ra y \mid y \ra y' \in \RR \}$ obtained by reversing the reactions of $G$.
\end{definition}

\section{Siphons, Drainability, Self-replicability}\label{sec:sdsr}
We will say that a set of species is \textbf{self-replicable} if there is an initial state, and a reaction pathway from that state that will increase the numbers of each species in the set. In ecological terms, this would capture the potential for a set of species to simultaneously prosper and increase in number. The opposite notion is that of \textbf{drainable} sets of species, where from an initial state a reaction pathway can deplete the numbers of each species. In ecological terms, this notion captures the potential for a set of species to simultaneously decrease in number, and is connected to the threat of extinction or non-persistence. We now define these notions more formally.

\begin{definition}\label{def:siphon}
Let $G=(\SS,\RR)$ be a reaction network. A set $T\subseteq\SS$ of species is
\begin{enumerate}

\item  A {\em siphon} iff for every reaction $y \rightarrow y'\in\RR$, if the complex $y'$ contains a species from $T$ then the complex $y$ contains a species from $T$.

\item\label{def:critical} {\em Critical} iff there is no positive conservation law whose support is contained in $T$.

\item\label{def:sr} {\em Self-replicable} iff there is a reaction pathway $y\rpath_G y'$ such that for every species $i\in T$, the $i$'th component $(y'-y)_i$ is strictly positive.

\item\label{def:dr} {\em Drainable} iff $T$ is self-replicable for $G^\text{op}$, i.e., there is a reaction pathway $y\rpath_G y'$ such that for every species $i\in T$, the $i$'th component $(y'-y)_i$ is strictly negative.

\item {\em Closed} iff for every reaction $y\ra y'\in\RR$, if $\supp{y}\subseteq T$ then $\supp{y'}\subseteq T$. The {\em closure} $\op{Cl}(T)$ is the smallest closed set containing $T$.

\end{enumerate}
\end{definition}

Remark~\ref{Rmk:1}.\ref{intersection} will show that the intersection of closed sets is closed, and hence the closure is unique.

\begin{notation}
As is usual, one calls a set {\em maximal} (respectively, {\em minimal}) among all nonempty sets having a property $P$ if no set having property $P$ properly contains it (is properly contained in it). In this sense, we shall refer to maximal and minimal siphons, critical siphons, self-replicable sets, and drainable sets. In particular, note that a minimal critical siphon is minimal among all critical siphons, but not necessarily minimal among all critical sets. Also see Remark~\ref{Rmk:1}.\ref{rmk:mincritsiph}.
\end{notation}

Siphons and critical siphons have been defined before in the chemical reaction network literature~\cite{Angeli}. Though ``siphon'' appears to be the more conventional term, such sets have also been called semi-locking sets. We find this to be a better description of their nature. They don't necessarily siphon off any flow; however, if all the siphon species are absent, then they remain absent. Critical siphons have also been referred to as ``relevant siphons.''~\cite{shiu}

The notion of ``closure'' with respect to a set of reactions of a starting ``food'' set of species  has been considered before in the literature of autocatalytic sets~\cite{hordijk_steel}. Gnacadja~\cite{gnacadja} has considered the notions of ``reach-closure'' of a set. Our notion of closed sets is cognate with both these notions.

\begin{example}
Consider the reaction network $G$ with reactions\[X\leftrightharpoons 2Y, \mbox{  }2X\leftrightharpoons Y \] from Example~\ref{ex:x2y}. The set $\{X,Y\}$ is:
\begin{enumerate}

\item A siphon, because every reaction with either $X$ or $Y$ on the product side requires either $X$ or $Y$ on the reactant side. The set $\{X\}$ is not a siphon, because of the $G$-reaction $Y\ra 2X$.

\item Critical because there are no conservation laws.

\item Self-replicable because there is a $G$-reaction pathway
\[{2X + 2Y \rpath_G 3X + 3Y} = 2X + 2Y \to X + 4Y \to 3X + 3Y\]
increasing both the number of $X$ and number of $Y$.

\item Drainable because there is a $G$-reaction pathway
\[
{3X + 3Y \rpath_G 2X + 2Y} = 3X + 3Y\to X + 4Y \to 2X + 2Y
\]
decreasing both the number of $X$ and the number of $Y$.

\item Closed. In particular, the set of all species is trivially always closed. The set $\{X\}$ is not closed, because the $G$-reaction $X\ra 2Y$ produces the species $Y$ which was not in the set.

\end{enumerate}
\end{example}

The next remark collects some easy observations about these definitions.

\begin{remark}\label{Rmk:1}
For every reaction network:
\begin{enumerate}
\item\label{closedsiphon} As shown in \cite[Proposition~3.4]{gnacadja}, a set $T$ is a siphon iff its complement $\SS\setminus T$ is closed. To see this, observe that\
\\$\SS\setminus T$ is not closed\
\\$\Leftrightarrow$ There exists a reaction $y\rightarrow y'$ such that the complex $y$ does not contain any species from $T$, while the complex $y'$ contains a species from $T$.\
\\$\Leftrightarrow$ $T$ is not a siphon.

\item\label{intersection} The union of siphons is a siphon. The intersection of closed sets is closed. In particular, this is why the definition of closure of a set is well-defined.

\item The intersection of siphons need not be a siphon. Consider the following reactions: \{$X + Y\rightarrow Y$, $X+Z \rightarrow Y$\}. In this example, the sets $\{X,Y\}$ and $\{Y,Z\}$ are both siphons, but their intersection $\{Y\}$ is not a siphon. For the same reason, the union of closed sets need not be closed.

\item \label{subset-critical}Every subset of a critical set is critical. If not, suppose $T'$ is a non-critical subset of $T$. Then there exists a positive conservation law $w.x$ with $\supp{w}\subseteq T'\subseteq T$, contradicting $T$ being critical.

\item\label{rmk:mincritsiph} As a consequence, every minimal critical siphon is also minimal among siphons.

\item \label{subset-self} Every subset $P$ of a self-replicable set $T$ is self-replicable since if $y\rpath_G y'$ is such that the $i$'th component $y'_i - y_i > 0$ for all $i\in T$ then a fortiori $y'_i - y_i > 0$ for all $i\in P$.

\item \label{subset-drainable}Similarly, every subset of a drainable set is drainable.

\item The union of critical sets need not be critical. Consider the reaction $X \rightarrow Y$. In this reaction the sets $\{X\}$ and $\{Y\}$ are both critical, but their union $\{X,Y\}$ is not critical because of the positive conservation law $x + y$.

\item\label{closedunderunion} The union of self-replicable sets/ drainable sets need not be self-replicable/ drainable. Consider the following reactions : \{$2X \rightarrow Y$, $2Y \rightarrow X$\}. In this example, the sets $\{X\}$ and $\{Y\}$ are self-replicable, but their union $\{X,Y\}$ is not self-replicable. Reversing all reaction arrows gives an example where the union of drainable sets is not drainable.

\end{enumerate}
\end{remark}

\begin{lemma}\label{lem:siphonrpath}
Let $G=(\SS,\RR)$ be a reaction network. A set $T\subseteq\SS$ of species is a siphon iff for every reaction pathway
$y \rpath_G y'$, if the complex $y'$ contains a species from $T$ then the complex $y$ contains a species from $T$.
\end{lemma}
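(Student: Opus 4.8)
The plan is to prove the two implications separately. The reverse implication is essentially a tautology, and the forward implication reduces, via a one-step claim about dilutions, to an induction on the length of a reaction pathway.

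For the ($\Leftarrow$) direction, observe that every $G$-reaction $y\ra y'\in\RR$ is itself a $G$-dilution --- dilute by the zero complex $w=0\in\R^\SS_{\geq 0}$ --- and hence constitutes a length-one reaction pathway $y\rpath_G y'$. So if the stated pathway condition holds, it holds in particular for all $G$-reactions, which is exactly the definition of $T$ being a siphon.

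For the ($\Rightarrow$) direction, assume $T$ is a siphon. The key intermediate step is to show that the siphon property is inherited by a single $G$-dilution: if $z\ra z'$ is the dilution of a $G$-reaction $y\ra y'$ by some $w\in\R^\SS_{\geq 0}$, so that $z=y+w$ and $z'=y'+w$, and if $z'$ contains a species of $T$, then $z$ contains a species of $T$. Since $G$ is positive, all complexes in sight are positive, so $\supp(z')=\supp(y')\cup\supp(w)$ and $\supp(z)=\supp(y)\cup\supp(w)$. Thus $\supp(z')\cap T\neq\emptyset$ forces either $\supp(w)\cap T\neq\emptyset$, in which case $\supp(z)\supseteq\supp(w)$ already meets $T$, or else $\supp(y')\cap T\neq\emptyset$, in which case the siphon hypothesis applied to $y\ra y'$ gives $\supp(y)\cap T\neq\emptyset$, so $\supp(z)\supseteq\supp(y)$ meets $T$.

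Finally I would bootstrap this to an arbitrary reaction pathway $y=y(0)\ra y(1)\ra\dots\ra y(k)=y'$ by induction on $k$. The base case $k=0$ is immediate since $y=y'$. For the inductive step, if $y(k)$ contains a species of $T$, the dilution claim applied to the last step $y(k-1)\ra y(k)$ shows that $y(k-1)$ contains a species of $T$, and then the induction hypothesis applied to the shorter pathway $y(0)\ra\dots\ra y(k-1)$ shows that $y=y(0)$ contains a species of $T$. There is no real obstacle here: the only thing needing minor care is the additivity of supports for sums of positive complexes (this is exactly where positivity of the network enters), together with the remark that $G$-reactions are themselves $G$-dilutions, which is what makes the reverse implication a genuine special case of the pathway hypothesis.
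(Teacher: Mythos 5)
Your proof is correct and takes essentially the same route as the paper's: the reverse direction because every $G$-reaction is itself a (length-one) reaction pathway, and the forward direction by showing that a single dilution preserves the siphon condition and then chaining over the pathway by induction. Your explicit appeal to positivity to get $\supp(y'+w)=\supp(y')\cup\supp(w)$ is consistent with the paper's standing convention of positive reaction networks, and indeed the paper's terser proof (``true for every dilution, hence for chains of dilutions'') implicitly relies on the same non-negativity to rule out cancellations.
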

\begin{proof}
($\Leftarrow$:) Since every reaction is a reaction pathway.\
\\($\Rightarrow$:) Suppose $T$ is a siphon. Then by definition, for every reaction $y\ra y'\in\RR$, if the complex $y'$ contains a species from $T$ then the complex $y$ contains a species from $T$. In particular, this is true for every dilution $y+w\to y'+w$ of $y\to y'$, where $w\in \R^\SS_{\geq 0}$. Therefore, it must be true for chains of dilutions, i.e., reaction pathways.
\end{proof}

\begin{definition}[Critical point]\label{def:critpt}
Let $G=(\SS,\RR)$ be a reaction network with stoichiometric subspace $H = \op{span}\{y'-y\mid y\rightarrow y'\in\RR\}$. A point $z\in\R^S_{\geq 0}$ is a {\em critical point} iff the affine space $z + H$ meets the positive orthant $\R^\SS_{>0}$.
\end{definition}

Note that a critical point is a point whose support is {\em stoichiometrically admissible} in the language of \cite{gnacadja}. Critical points are a generalization of critical siphon-points defined in \cite{gopalkrishnan}. Firstly, we do not require the reaction network to be weakly-reversible. Further, we do not require the point $z$ to be an equilibrium point, nor do we require the set $\{ i \mid z_i = 0\}$ to be a siphon.

\begin{theorem}\label{critical-equivalence}
Let $G=(\SS,\RR)$ be a reaction network. Let $T\subseteq\SS$. Then the following are equivalent:
\begin{enumerate}
\item $T$ is critical.
\item $\ker{\Gamma_T}\cap \R^T_{\geq 0} = \{0\}$.
\item Every point $z\in\R^\SS_{\geq 0}$ with $\{i\mid z_i =0\}=T$ is a critical point.
\item There exists a critical point $z\in\R^\SS_{\geq 0}$ with $z_i =0$ iff $i\in T$.
\end{enumerate}
\end{theorem}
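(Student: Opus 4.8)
The plan is to prove the equivalence by establishing a cycle of implications, using the definition of critical point as the bridge to the linear-algebraic conditions. First I would unpack condition~(2): saying $\ker\Gamma_T\cap\R^T_{\geq 0}=\{0\}$ means there is no nonzero nonnegative vector $u\in\R^T$ with $\Gamma_T u = 0$. A nonnegative vector $u\in\R^T$ supported on some $T'\subseteq T$ with $\Gamma_T u = 0$ is exactly the coefficient vector of a positive conservation law whose support lies in $T$: recall a conservation law $w\cdot x$ requires $w\perp H$, i.e. $\Gamma w = 0$ (rows of $\Gamma$ span $H$), and positivity means $w\in\R^\SS_{\geq 0}\setminus\{0\}$; restricting attention to $w$ supported in $T$ is the same as asking for a nonzero nonnegative $u\in\R^T$ with $\Gamma_T u = 0$ (extending by zero off $T$). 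Hence $(1)\iff(2)$ is essentially a translation of definitions, and I would write it as the first step.

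Next I would handle the equivalence of $(2)$ with the critical-point conditions via a separation/duality argument. For $(2)\implies(3)$: fix $z\in\R^\SS_{\geq 0}$ with zero set exactly $T$; I want to show $(z+H)\cap\R^\SS_{>0}\neq\emptyset$, i.e. there is $v\in H$ with $z+v > 0$. On coordinates outside $T$ we have $z_i>0$, so we only need to worry about the $T$-coordinates, where $z_i = 0$: we need $v\in H$ with $v_i>0$ for all $i\in T$ (and $z_i+v_i$ can be kept positive off $T$ by scaling $v$ small if needed — actually we need $v$ strictly positive on $T$ and then $z+\epsilon v>0$ for small $\epsilon$ since $H$ is a subspace). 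So $(3)$ reduces to: there exists $v\in H$ with $v|_T > 0$ (strictly positive on every coordinate of $T$). By linear-algebra duality (a Gordan-type alternative, or the Farkas lemma for the cone $H\cap\{v : v|_T\geq 0\}$), the nonexistence of such $v$ is equivalent to the existence of a nonzero $u\in\R^T_{\geq 0}$ that annihilates $H$ when extended by zero, i.e. with $\Gamma_T u = 0$ — which is precisely the negation of $(2)$. So $\neg(2)\iff$ no such $v$ exists $\iff \neg(3)$ (for this particular $z$, but the argument is uniform in $z$ since it only used that the zero set is $T$), giving $(2)\iff(3)$.

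Then $(3)\implies(4)$ is immediate once we exhibit any $z$ with zero set exactly $T$ — for instance the indicator vector of $\SS\setminus T$ — and $(4)\implies(2)$ runs the separation argument backwards: a critical point $z$ with zero set $T$ furnishes $v\in H$ with $z+v>0$, hence $v|_T>0$ (coordinatewise on $T$), and the existence of such $v$ forbids any nonzero $u\in\R^T_{\geq 0}$ with $\Gamma_T u=0$ by the pairing $\langle u, v|_T\rangle = \langle u, v\rangle = u^\top \Gamma_T^\top(\text{row combination}) $... more cleanly: if $\Gamma_T u = 0$ with $u\geq 0$, then for any $v\in H$, $v = \Gamma^\top c$ for some $c$, so $v|_T\cdot u = (\Gamma_T^\top c)\cdot u = c\cdot(\Gamma_T u) = 0$, contradicting $v|_T>0$ and $u\geq 0, u\neq 0$. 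This closes the cycle $(1)\iff(2)\iff(3)\implies(4)\implies(2)$.

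The main obstacle I anticipate is the duality step $(2)\iff(3)$: one must be careful that the relevant alternative theorem is applied to the right cone. The clean formulation is: \emph{for a subspace $H\subseteq\R^\SS$ and a subset $T\subseteq\SS$, exactly one of the following holds: (a) there exists $v\in H$ with $v_i>0$ for all $i\in T$; (b) there exists $u\in\R^\SS_{\geq 0}\setminus\{0\}$ with $\supp u\subseteq T$ and $u\perp H$.} This is a standard consequence of Gordan's theorem / the Farkas lemma applied to the linear map $H\to\R^T$ given by restriction, and I would either cite it or give the two-line Farkas derivation inline. Everything else is bookkeeping: translating "positive conservation law supported in $T$" into "nonzero nonnegative kernel vector of $\Gamma_T$", and observing that a critical point's defining condition only constrains the $T$-coordinates because off $T$ the chosen $z$ is already strictly positive and $H$ is a linear (not merely affine) space so small perturbations stay positive there.
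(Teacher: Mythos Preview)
Your proposal is correct and follows essentially the same route as the paper. Both arguments translate $(1)\Leftrightarrow(2)$ directly from the definitions, invoke Gordan's alternative on $\Gamma_T$ to pass from $(2)$ to $(3)$ via the perturbation $z+\epsilon v$, take $(3)\Rightarrow(4)$ as immediate, and close the cycle by pairing a putative nonnegative kernel vector (equivalently, a positive conservation law supported in $T$) against any $v\in H$ with $v|_T>0$; the only difference is that the paper organizes the last step as $\neg(1)\Rightarrow\neg(4)$ while you phrase it as $(4)\Rightarrow(2)$, which is the same pairing argument in contrapositive form.
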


\begin{proof}
Let $z\in\R^\SS_{\geq 0}$ be such that $z_i =0$ iff $i\in T$. Let $H = \op{span}\{y' - y \mid y\ra y'\in\RR\}$ be the stoichiometric subspace of $G$.\
\
\\$\neg(2)\Rightarrow\neg(1)$. If $0\neq v\in\ker{\Gamma_T}\cap\R^T_{\geq 0}$ then $v\cdot x$ is a positive conservation law with support in $T$, and $T$ is not critical.\
\
\\$(2)\Rightarrow(3)$. From the Transposition theorem of Gordan~\cite[(31), pp.~95]{schrijver} applied to $\Gamma_T$, there exists $v\in\R^\RR$ such that $v \Gamma_T>0$. Note that $v \Gamma$ is in the stoichiometric subspace $H$ of $G$. Consider $w := z + \epsilon v \Gamma$ where $\epsilon > 0$ is to be chosen sufficiently small. It is enough to show that there exists $\epsilon > 0$ so that $w \in\R^\SS_{>0}$. We show this by case analysis.
\\Case 1: $i\in T$. Since $z_i=0$, we have $w_i = \epsilon (v \Gamma)_i > 0$.\
\\Case 2: $i\notin T$. Then $z_i > 0$ and so for  $\epsilon < \frac{z_i}{ |(v\Gamma)_i|}$, we have $w_i = z_i + \epsilon (v\Gamma)_i > 0$.\
\
\\$(3)\Rightarrow(4)$ is obvious.\
\
\\$\neg(1)\Rightarrow\neg(4)$. Suppose $T$ is not critical. Then there exists a positive conservation law $w\cdot x$ such that $\supp{w}\subseteq T$. In particular, $w\cdot (z+H) = w\cdot z + w\cdot H = w\cdot z = 0$. But $w\cdot x\neq 0$ for all $x\in\R^\SS_{>0}$. We conclude that $z+H$ does not meet $\R^\SS_{>0}$. Hence, $z$ is not a critical point.
\end{proof}

\begin{example}\label{ex:counter}
Consider the reaction network
\[X\rightleftharpoons 2X, \mbox Y\rightleftharpoons X+Y.
\]
The associated {\em event-system}~\cite{adleman-2008} is $\mathcal{E}_G = \{x-x^2 , y - xy\}$. The network is autocatalytic as per \cite[Definition~6.2]{gopalkrishnan} because $x-x^2\in\mathcal{E}_G$ and for all $k\in \mathbb{N}$ the binomials $1-x^k$ are not contained in the ideal $(\mathcal{E}_G)$. However, contrary to \cite[Conjecture~1]{gopalkrishnan}, there are no critical siphons. The set $\{X\}$ is not a siphon because $X$ is produced by the second reaction. Neither of the siphons $\{Y\}$ and $\{X,Y\}$ are critical siphons, since the concentration of $Y$ is dynamically invariant.
\end{example}

Example~\ref{ex:counter} motivated our definition of drainable and self-replicable sets. The key observation was that the failure of \cite[Conjecture~1]{gopalkrishnan} can be attributed to working with only one species at a time. If we work with sets of species instead then, as we show in Theorem~\ref{minimal-siphon}, an appropriately modified version of \cite[Conjecture~1]{gopalkrishnan} is true.

\section{Diffusive Matrices}\label{sec:dm}
\begin{definition}
A matrix $A = (a_{ij})_{m\times n}$ is {\em diffusive} iff all diagonal elements are strictly negative, and all off-diagonal elements are non-negative. That is, $a_{ii}<0$ and $a_{ij}\geq 0$ for all $i\in\{1,2,\dots,m\}$ and $j\in\{1,2,\dots,n\}$ with $i\neq j$. A diffusive matrix is {\em strongly diffusive} iff all off-diagonal elements are strictly positive.
\end{definition}

\begin{example}
Laplacian matrices (or their negatives, depending upon convention) are diffusive. If the graph is a clique then the Laplacian matrix is strongly diffusive. If the graph is strongly connected then some power of the Laplacian matrix is strongly diffusive.
\end{example}

\begin{notation}
Let $A$ be a matrix. Then $\op{rows}(A)$ is the set of row vectors of $A$.
\end{notation}

\begin{lemma}\label{positive_cone}
Let $A=(a_{ij})_{n\times n}$ be a square matrix with $a_{ij}>0$ for all $i\neq j$.\\ If ${\op{cone}(\op{rows}(A)) \cap \R^n_{\geq 0} \setminus \{0\}}$ is nonempty then $\op{cone}(\op{rows}(A)) \cap \R^n_{>0}$ is nonempty.
\end{lemma}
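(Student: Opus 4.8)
The plan is to prove the contrapositive's positive direction directly: starting from a nonzero nonnegative vector $b \in \op{cone}(\op{rows}(A))$, perturb it into the strictly positive orthant while staying inside the cone. Write $b = \sum_i \lambda_i r_i$ with $\lambda_i \geq 0$ and $r_i$ the $i$-th row of $A$; since $b \neq 0$, at least one $\lambda_i > 0$. The idea is that $b$ itself is nonnegative but possibly has some zero coordinates, and adding a small positive multiple of any single row $r_j$ with $\lambda_j > 0$ cannot hurt too much: the off-diagonal positivity of $A$ means that adding $\epsilon r_j$ strictly increases every coordinate except the $j$-th, while the $j$-th coordinate changes by $\epsilon a_{jj} < 0$ but starts from $b_j \geq 0$, so for small $\epsilon$ it stays $\geq 0$ — not obviously $> 0$, which is the crux.

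So first I would handle the coordinates that $b$ already makes positive versus those it leaves at zero. Let $Z = \{i \mid b_i = 0\}$. For each $i \in Z$ and each $j \neq i$ with $\lambda_j > 0$, the vector $b + \epsilon r_j$ has $i$-th coordinate $\epsilon a_{ji} > 0$. The trouble is a coordinate $i \in Z$ for which the only rows $r_j$ appearing in the representation of $b$ with positive coefficient are $j = i$ itself — but if $\lambda_i > 0$ then $b_i = \lambda_i a_{ii} + \sum_{k \neq i} \lambda_k a_{ki}$, and since $b_i = 0$, $a_{ii} < 0$, and all other terms are $\geq 0$, we would need all $\lambda_k a_{ki} = 0$ for $k \neq i$, i.e. $\lambda_k = 0$ whenever $a_{ki} > 0$; but $a_{ki} > 0$ for all $k \neq i$, forcing $\lambda_k = 0$ for all $k \neq i$, so $b = \lambda_i r_i$, and then $b_i = \lambda_i a_{ii} < 0$, contradicting $b \in \R^n_{\geq 0}$. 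Hence for every $i \in Z$ there is some $j \neq i$ with $\lambda_j > 0$, so $r_j$ strictly increases coordinate $i$.

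Now I would combine these: pick any single index $j_0$ with $\lambda_{j_0} > 0$, and form $b' = b + \epsilon \sum_{j : \lambda_j > 0} r_j$ for small $\epsilon > 0$. By the previous paragraph every $i \in Z$ has some $r_j$ (with $\lambda_j>0$) strictly increasing it, and all the other $r_j$'s contribute nonnegatively to that coordinate, so $b'_i \geq \epsilon a_{ji} > 0$. For $i \notin Z$ we have $b_i > 0$, so for $\epsilon$ small enough (below $b_i$ divided by the absolute value of the total perturbation in coordinate $i$, as in the argument in Theorem~\ref{critical-equivalence}) we keep $b'_i > 0$. Taking $\epsilon$ smaller than all these finitely many thresholds, $b' \in \R^n_{>0}$, and $b' = \sum_j (\lambda_j + \epsilon [\lambda_j > 0]) r_j \in \op{cone}(\op{rows}(A))$. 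The main obstacle is exactly the case analysis showing no zero coordinate of $b$ is "isolated" — that is the step where strict positivity of the off-diagonal entries (not just nonnegativity) is essential, and it is what rules out the degenerate configuration $b = \lambda_i r_i$.
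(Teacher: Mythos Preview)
Your argument has a genuine gap in the perturbation step. You correctly establish that for each $i\in Z$ there exists some $j\neq i$ with $\lambda_j>0$, but you then assert that ``all the other $r_j$'s contribute nonnegatively to that coordinate.'' This is false: nothing you proved rules out $\lambda_i>0$ itself, and when $\lambda_i>0$ the row $r_i$ contributes $a_{ii}$ (which you are taking to be negative) to coordinate $i$. Concretely, take
\[
A=\begin{pmatrix}-10 & 1\\ 1 & -\tfrac{1}{20}\end{pmatrix},\qquad \lambda=(1,10),\qquad b=\lambda A=(0,\tfrac12).
\]
Here $Z=\{1\}$, both $\lambda_1,\lambda_2>0$, and your perturbation is $b'=b+\epsilon(r_1+r_2)=(-9\epsilon,\ \tfrac12+\tfrac{19}{20}\epsilon)$, whose first coordinate is negative for every $\epsilon>0$. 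So the scheme of summing over the rows that appear in the representation of $b$ does not work. (A side point: you also assume $a_{ii}<0$, which the lemma does not; but the gap above persists even under that extra hypothesis.)

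The paper's proof avoids this by choosing the perturbing row according to the \emph{coordinates of $b$}, not the coefficients $\lambda$. Since $b\geq 0$ and $b\neq 0$, pick $i$ with $b_i>0$ and set $w=b+\epsilon\, r_i$. Then for $j\neq i$ one has $w_j=b_j+\epsilon a_{ij}\geq \epsilon a_{ij}>0$, while $w_i=b_i+\epsilon a_{ii}>0$ for small $\epsilon$ because $b_i>0$. No representation of $b$ is needed at all; one only uses that $b$ and $r_i$ both lie in the cone. Your detour through the $\lambda_j$'s is precisely what introduces the failure mode.
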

\begin{proof}
Suppose $\op{cone}(\op{rows}(A)) \cap \R^n_{\geq 0} \setminus \{0\} \neq \emptyset$. Let $v\in\op{cone}(\op{rows}(A))\cap\R^n_{\geq 0}\setminus\{0\}$, and let $i$ be such that $v_i > 0$. Define
\[
w(\epsilon) := v + \epsilon ( a_{i1}, a_{i2},\dots, a_{in}).
\]
Then for sufficiently small $\epsilon > 0$, we claim that $w(\epsilon) \in \R^n_{>0}$, which completes the proof because $w(\epsilon) \in \op{cone}(\op{rows}(A))$. To see this, note first that $v_i > 0 \Rightarrow v_i + \epsilon a_{ii} = w(\epsilon)_i > 0$ for sufficiently small $\epsilon > 0$. Next, for all $j\neq i$, we have $w(\epsilon)_j = v_j + \epsilon a_{ij} > 0$ since $a_{ij} > 0$ by assumption.
\end{proof}

\begin{lemma}\label{lem:GE1}
Let $A$ be a diffusive square $n\times n$ matrix, and let $- A_1$ be the matrix obtained after one step of Gaussian elimination applied to $- A$. Then
\begin{enumerate}
\item $\op{cone}(\op{rows}(A_1))\subseteq \op{cone}(\op{rows}(A))$, and
\item Either $A_1$ is diffusive or $\op{rows}(A_1)\cap \R^n_{\geq 0}$ is nonempty.
\end{enumerate}
\end{lemma}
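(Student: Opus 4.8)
The plan is to write out one step of Gaussian elimination explicitly, being careful that it is applied to $-A$ and not to $A$. Since $A$ is diffusive, $(-A)_{11} = -a_{11} > 0$ is a nonzero pivot, so no row exchange is needed, and the step consists of replacing, for each $i \geq 2$, the $i$-th row of $-A$ by $\op{row}_i(-A) - \frac{a_{i1}}{a_{11}}\op{row}_1(-A)$, while keeping the first row. Negating back to recover $A_1$ from $-A_1$, this reads: $\op{row}_1(A_1) = \op{row}_1(A)$, and for $i \geq 2$, $\op{row}_i(A_1) = \op{row}_i(A) + \lambda_i\,\op{row}_1(A)$, where $\lambda_i := -a_{i1}/a_{11}$. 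The one sign fact that makes everything work is that $\lambda_i \geq 0$, because $a_{i1} \geq 0$ (an off-diagonal entry of a diffusive matrix, as $i \geq 2$) and $a_{11} < 0$.

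Part 1 is then immediate. Each row of $A_1$ is a non-negative linear combination of rows of $A$: $\op{row}_1(A_1)$ is $\op{row}_1(A)$ itself, and for $i \geq 2$, $\op{row}_i(A_1) = 1\cdot\op{row}_i(A) + \lambda_i\cdot\op{row}_1(A)$ with $\lambda_i \geq 0$. Hence $\op{rows}(A_1) \subseteq \op{cone}(\op{rows}(A))$, and since $\op{cone}(\op{rows}(A))$ is a convex cone it contains the cone generated by any subset of itself, giving $\op{cone}(\op{rows}(A_1)) \subseteq \op{cone}(\op{rows}(A))$.

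For Part 2 I would examine the entries $(A_1)_{ij} = a_{ij} + \lambda_i a_{1j}$ for $i \geq 2$ position by position. For $j = 1$ this is $a_{i1} + \lambda_i a_{11} = 0$ --- which is precisely what the elimination step achieves --- and $0 \geq 0$ is fine for an off-diagonal entry. For $j \geq 2$ with $j \neq i$, both $a_{ij}$ and $a_{1j}$ are off-diagonal entries of the diffusive matrix $A$, hence non-negative, and $\lambda_i \geq 0$, so $(A_1)_{ij} \geq 0$. Thus in rows $2,\dots,n$ of $A_1$ the only entry whose sign is not forced non-negative is the diagonal entry $(A_1)_{ii} = a_{ii} + \lambda_i a_{1i}$, a sum of the strictly negative $a_{ii}$ and the non-negative quantity $\lambda_i a_{1i}$. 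The first row of $A_1$ equals the first row of $A$, so its diagonal entry $a_{11}$ is strictly negative and its off-diagonal entries are non-negative. Now dichotomize: if $(A_1)_{ii} < 0$ for all $i \in \{2,\dots,n\}$, then all diagonal entries of $A_1$ are strictly negative while all off-diagonal entries are non-negative, i.e.\ $A_1$ is diffusive; otherwise there is some $i \geq 2$ with $(A_1)_{ii} \geq 0$, and by the entrywise analysis above the whole $i$-th row of $A_1$ is then non-negative, so $\op{row}_i(A_1) \in \R^n_{\geq 0}$ witnesses that $\op{rows}(A_1) \cap \R^n_{\geq 0}$ is nonempty.

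I do not expect a genuine obstacle; the only things to watch are the sign bookkeeping caused by eliminating on $-A$ rather than on $A$ (which is exactly what forces the helpful sign $\lambda_i \geq 0$), and the degenerate case $n = 1$, where Gaussian elimination does nothing, $A_1 = A$, and the matrix remains diffusive.
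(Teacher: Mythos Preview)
Your argument is correct and is essentially the paper's argument, carried out more explicitly. The paper treats ``one step'' as a single elementary row operation (row swap, scaling, or adding a multiple of one row to another) and checks each case, whereas you interpret ``one step'' as clearing the entire first column below the pivot; either reading yields the same key observation that the multiplier $-a_{ji}/a_{ii}$ is nonnegative, from which both parts follow just as you wrote.
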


\begin{proof}
(1) Recall that Gaussian elimination proceeds by either (a) exchanging rows, or
(b) multiplying a row by a scalar, or (c) adding a constant times one row to another row. We consider each of these cases separately.\
\
\\(a) Since $A$ is diffusive, all elements on the diagonal of $-A$ are strictly positive, in particular non-zero. So the algorithm will not exchange rows.\
\\(b) If a row is multiplied to make the pivot element equal $1$, then it is multiplied by the reciprocal of the diagonal element, which is a positive number, so $\op{cone}(\op{rows}(A_1))\subseteq \op{cone}(\op{rows}(A))$.\
\\(c) If $\op{row}_j \leftarrow \op{row}_j + c \cdot\op{row}_i$ for some rows $i,j$ then the algorithm will choose $c = -\frac{a_{ji}}{a_{ii}} > 0$ so that
\begin{align*}
\op{cone}(\op{rows}(A_1)) &= \op{cone}(\{\op{row}_1,\op{row}_2,\dots, \op{row}_{j-1},\op{row}_j + c\cdot\op{row}_i,\op{row}_{j+1},\dots,\op{row}_n \})\
\\&\subseteq \op{cone}\{\op{row}_1,\op{row}_2,\dots, \op{row}_n,\op{row}_j + c\cdot\op{row}_i \}\
\\&= \op{cone}\{\op{row}_1,\op{row}_2,\dots, \op{row}_n \}\qquad (\text{Since }c>0)\
\\&= \op{cone}(\op{rows}(A)).
\end{align*}\
\
\\(2) Suppose $A_1$ is not diffusive. Since scalar multiplication preserves diffusivity, $A_1$ must have been obtained from $A$ by an operation $\op{row}_j \leftarrow \op{row}_j + c \op{row}_i$ for some $c > 0$ and rows $i,j$. Since $A$ was diffusive and by the choice of $c$, this operation preserves non-negativity of off-diagonal terms. By non-diffusivity of $A_1$, the diagonal entry on the $j$'th row of $A_1$ must be non-negative, so that the $j$'th row of $A_1$ is in $\R^n_{\geq 0}$.
\end{proof}

\begin{remark}
The reason we are applying Gaussian elimination to $-A$ instead of to $A$ is because Gaussian elimination as usually described in textbooks sets the pivot elements to be positive, whereas diffusive matrices have negative elements on the diagonal. If one were to either change the convention in the Gaussian elimination algorithm, or reverse the sign convention for what is meant by a diffusive  matrix, we wouldn't need this negative sign.
\end{remark}

\begin{lemma}\label{lem:GE2}
Let $A$ be a strongly diffusive square matrix. Suppose Gaussian elimination is applied to $-A$ to obtain the sequence $-A = -A_0, -A_1,-A_2,\dots, -A_k$ of matrices, and suppose $A_1, \dots, A_{k-1}$ are diffusive and $A_k$ is not. If $\op{rows}(A_k)\cap \R^n_{\geq 0} = \{0\}$ then $A_k$ has the sign pattern
\[ \left( \begin{array}{cccc}
<0 & \geq 0 & \dots      &\geq 0 \\
0 & <0 & \dots      &\geq 0 \\
\vdots & \vdots & <0 &\geq 0 \\
0 & 0 & \dots &0 \end{array} \right)
\]
That is, if $A_k = (a^k_{ij})_{n\times n}$ then
\[
a_{ij}^k \begin{cases}
\geq 0 \text{ if } i>j\\
= 0 \text{ if } i<j \text{ or } i=n\\
<0 \text{ if } i=j\neq n
\end{cases}
\]
\end{lemma}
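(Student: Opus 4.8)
The plan is to first locate a zero row of $A_k$ using Lemma~\ref{lem:GE1}, and then to trace the elimination carefully so as to pin down the full shape of $A_k$. Since $A_0=A,A_1,\dots,A_{k-1}$ are all diffusive, their diagonal entries are nonzero, so the elimination never exchanges rows: it processes the columns in the natural order, always using the $p$-th diagonal entry as pivot for column $p$, and (as in the proof of Lemma~\ref{lem:GE1}) each elimination step has the form $\op{row}_j\leftarrow\op{row}_j+c\,\op{row}_i$ with $c>0$, with occasional positive rescalings of pivot rows in between. Applying Lemma~\ref{lem:GE1}(2) to the transition $A_{k-1}\to A_k$ (legitimate because $A_{k-1}$ is diffusive) shows that $\op{rows}(A_k)\cap\R^n_{\geq 0}$ is nonempty, so by the hypothesis $\op{rows}(A_k)\cap\R^n_{\geq 0}=\{0\}$ the matrix $A_k$ must have a zero row.

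Next I would establish, by induction on $p$, the following invariant for the matrix obtained right after column $p$ has been fully processed, assuming diffusivity has been maintained so far: writing that matrix as $-B$, the matrix $B$ has all diagonal entries $>0$, all entries strictly below the diagonal in columns $1,\dots,p$ equal to $0$, and every remaining off-diagonal entry $<0$. The base case $p=0$ is just strong diffusivity of $-A$. For the inductive step, when column $p+1$ is processed, the pivot row $p+1$ already satisfies the invariant, and each elimination $\op{row}_j\leftarrow\op{row}_j+c\,\op{row}_{p+1}$ with $c>0$ zeros the $(j,p+1)$-entry, leaves the entries in columns $1,\dots,p$ at $0$, keeps every remaining off-diagonal entry strictly negative (a sum of two strictly negative numbers — this is exactly where strong diffusivity is essential), and can only push the $(j,j)$-entry toward $0$; as long as diffusivity is retained, the invariant is restored.

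Now I would combine the two. Suppose $A_k$ first arises while processing column $p+1$, in the step modifying some row $j$ with $p+1<j\le n$. Using the invariant for the state just before this step (applied to row $j$ and to the pivot row $p+1$), together with the effect of the step itself, one computes that row $j$ of $A_k$ has zeros in columns $1,\dots,p+1$, strictly positive entries in every column $p+2,\dots,n$ other than $j$, and a non-negative $(j,j)$-entry — the only possible source of non-diffusivity, since the off-diagonal entries in row $j$ remain non-negative. Hence row $j$ of $A_k$ lies in $\R^n_{\geq 0}$, so it is the zero row; but then it has no strictly positive entries, forcing $\{p+2,\dots,n\}\setminus\{j\}=\emptyset$, i.e. $j=n$ and $p+1=n-1$. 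Therefore $A_{k-1}$ is the matrix obtained after processing columns $1,\dots,n-2$, so by the invariant its rows $1,\dots,n-1$ — which are unchanged in $A_k$ — are upper triangular with strictly negative diagonal and non-negative above-diagonal entries, while the final step turns row $n$ into a vector $(0,\dots,0,\ast)$ with $\ast\ge 0$, which by the first step is the zero row. This is precisely the asserted sign pattern.

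I expect the induction establishing the structural invariant to be the main technical obstacle: one must check that the invariant genuinely survives every elementary operation carried out while processing a column, not merely the transitions between the listed matrices $A_i$, and it is exactly here that strong diffusivity of $A$ — rather than mere diffusivity — is used, since diffusivity alone would permit an off-diagonal entry to become zero and thereby destroy the strict positivity that drives the final contradiction.
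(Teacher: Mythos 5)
Your argument is correct, and it runs on the same engine as the paper's proof --- the elimination only ever adds positive multiples of rows or rescales positively, strict positivity of off-diagonal entries survives these operations, and the hypothesis forces the row that breaks diffusivity to be identically zero --- but you organize the bookkeeping differently. The paper tracks only the $n$'th column: $a^0_{in}>0$ for $i\neq n$ by strong diffusivity, this strict positivity persists through every step, so the zero row can only be row $n$; the upper-triangular shape is then dismissed in one sentence (``a consequence of Gaussian elimination reaching the last row''), which implicitly assumes the fatal operation can only happen after columns $1,\dots,n-1$ are fully processed. Your global invariant (zeros below the diagonal in the processed columns, strict signs everywhere else) tracks all unprocessed columns simultaneously, and therefore proves more: the fatal row has strictly positive entries in every unprocessed column other than its own, so being forced to vanish it must be row $n$ \emph{and} the step must occur while clearing column $n-1$, i.e.\ it is the very last elimination step; the triangular shape with $<0$ diagonal on rows $1,\dots,n-1$ then falls out of the diffusivity of $A_{k-1}$, whose rows $1,\dots,n-1$ are unchanged. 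In short, your stronger invariant substantiates exactly the part of the paper's proof that is left terse, at the cost of the routine within-column bookkeeping you yourself flag. Two cosmetic points: in stating the invariant you write ``that matrix as $-B$'' but then list the sign pattern of $-A_i$ rather than of $A_i$ (diagonal $>0$, off-diagonal $<0$); the content is right, but the labels should be fixed for consistency with ``strong diffusivity of $-A$'' in your base case, which should read strong diffusivity of $A$. Also, what you prove is the displayed pattern (zeros below the diagonal, $\geq 0$ above), which is the version used later in Theorem~\ref{rank-nullity}; the itemized case formula in the lemma's statement has the conditions $i>j$ and $i<j$ transposed, so do not try to match it literally.
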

\begin{proof}
Since $A_{k-1}$ is diffusive, and $A_k$ is not, then as in the proof of Lemma~\ref{lem:GE1}, $A_k$ must have been obtained from $A_{k-1}$ by an operation
\[
\op{row}^k_q := \op{row}^{k-1}_q + c\cdot \op{row}^{k-1}_p
\]
where $c>0$ and $p,q\in\{1,2,\dots,n\}$ are distinct rows. If $A_k = (a^k_{ij})_{n\times n}$ then
\[
a^k_{qj} = a^{k-1}_{qj} + c a^{k-1}_{pj}
\begin{cases}
= 0 \text{ if } j=p \text{ by choice of } c \text{ in Gaussian elimination;}\\
\geq 0 \text{ if } j\neq q, j\neq p \text{ since } a^{k-1}_{qj}\geq 0, a^{k-1}_{pj}\geq 0 \text{ by diffusivity of } A_{k-1};\\
\geq 0 \text{ if } j=q \text{ since } A_k \text{ is not diffusive, and }A_{k-1}\text{ is.}
\end{cases}
\]
Therefore $\op{row}^k_q \in\R^n_{\geq 0}$. By assumption, $\op{row}^k_q = 0$.

Suppose for contradiction that $q\neq n$. Consider the $n$'th column. Note that $a^0_{in} >0$ for all $i\neq n$ since $A^0 = A$ is strongly diffusive. It follows by induction that for all $i\neq n$, $a^1_{in},a^2_{in},\dots, a^k_{in} >0$. In particular, $a^k_{qn} > 0$ contradicting $\op{row}^k_q = 0$. Hence $q = n$.

The fact that $A_k$ is upper-triangular is a consequence of  Gaussian elimination reaching the last row. The sign pattern follows because $A_{k-1}$ was diffusive.
\end{proof}

\begin{corollary}\label{cor:fullrank}
Let $A$ be an $(n-1)\times n$ strongly diffusive matrix.\\ If ${\op{cone}(\op{rows}(A)) \cap \R^n_{\geq 0 }=\{0\}}$ then $A$ has rank $n-1$.
\end{corollary}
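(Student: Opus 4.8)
The plan is to run Gaussian elimination directly on the $(n-1)\times n$ matrix $-A$, exactly as in Lemmas~\ref{lem:GE1} and~\ref{lem:GE2}, and to track when diffusivity can be lost. Those two lemmas are stated for square matrices, but their proofs use squareness only at the very end, to conclude upper-triangularity once the elimination ``reaches the last row''; for an $(n-1)\times n$ matrix the correct substitute is that the elimination can use only columns $1,\dots,n-1$ as pivot columns, so in particular column $n$ is never a pivot column. I would first record this observation and check that, with this single change, both lemmas (and their proofs) apply verbatim to $-A$.

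So let $-A=-A_0,-A_1,\dots,-A_m$ be the sequence of matrices produced by Gaussian elimination on $-A$. By part~(1) of Lemma~\ref{lem:GE1}, $\op{cone}(\op{rows}(A_i))\subseteq\op{cone}(\op{rows}(A))$ for every $i$, so the hypothesis forces $\op{cone}(\op{rows}(A_i))\cap\R^n_{\geq 0}=\{0\}$ for every $i$; in particular no $A_i$ has a nonzero non-negative row. Combining this with part~(2) of Lemma~\ref{lem:GE1}, applied at each elimination step, each $A_i$ is \emph{either} diffusive \emph{or} has an all-zero row. I claim the second alternative never occurs. Suppose for contradiction that $k$ is minimal with $A_k$ not diffusive. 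Then $A_0,\dots,A_{k-1}$ are diffusive, so the first $k$ elementary operations swap no rows and use pivots only in the diagonal positions $(1,1),(2,2),\dots$, none of which lies in column $n$. Since $A_0=A$ is \emph{strongly} diffusive, every entry of column $n$ of $A_0$ is strictly positive (column $n$ is entirely off-diagonal); and the operations in play---scaling a row by a positive constant, or adding a positive multiple of one row to another---preserve strict positivity of all column-$n$ entries, exactly as in the proof of Lemma~\ref{lem:GE2}. Hence column $n$ of $A_k$ is strictly positive, so $A_k$ has no zero row, contradicting the dichotomy. Therefore every $A_i$, and in particular $A_m$, is diffusive.

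It then remains to read off the rank. Since no step swaps rows and every intermediate matrix is diffusive (hence has nonzero diagonal entries), the elimination processes all of columns $1,\dots,n-1$ and terminates with $A_m$ in row echelon form with $n-1$ nonzero pivots, namely its diagonal entries, which are strictly negative. Thus $\op{rank}(A_m)=n-1$, and since elementary row operations preserve rank, $\op{rank}(A)=n-1$, which is the claim. I expect the only genuinely delicate point to be the argument in the middle paragraph: it is precisely the strong diffusivity of $A$---not mere diffusivity---together with the cone hypothesis that forbids a zero row from ever appearing, and hence that keeps the elimination diffusive all the way down, so that the rank cannot drop.
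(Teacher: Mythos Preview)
Your proposal is correct and takes essentially the same approach as the paper. The paper's proof is terser---it observes that Gaussian elimination on the first $n-1$ rows of an $n\times n$ matrix never uses row $n$, so one may regard $A$ as those first $n-1$ rows of a strongly diffusive square matrix and invoke Lemma~\ref{lem:GE2} directly (the zero row would have to be row $n$, which is absent)---whereas you unpack that same column-$n$ argument explicitly for the $(n-1)\times n$ case; the content is identical.
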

\begin{proof}
Observe that Gaussian elimination does not employ the $n$'th row of an $n\times n$ matrix to put the $(n-1)\times n$ submatrix obtained by dropping the $n$'th row into upper triangular form.

Suppose ${\op{cone}(\op{rows}(A)) \cap \R^n_{\geq 0 }=\{0\}}$. Consider applying Gaussian elimination to $A$. After each step, either the matrix remains diffusive, or a row becomes non-negative, by Lemma~\ref{lem:GE1}. If we never get a non-negative row, Gaussian elimination goes all the way to the end, and gives us a matrix in row echelon form, which clearly has rank $n-1$. If we get a non-negative row, by assumption the row must be $0$. However, this contradicts Lemma~\ref{lem:GE2}, and hence this case can never happen.
\end{proof}

\begin{theorem}[Convex Rank-Nullity Theorem]\label{rank-nullity}
Let $A$ be a strongly diffusive $n\times n$ matrix. Then either $\ker{A} \cap \R^n_{\geq 0} \neq \{0\}$ or $\op{cone}(\op{rows}(A)) \cap \R^n_{>0}$ is nonempty or $\R^n_{<0} \subseteq \op{cone}(\op{rows}(A))$.
\end{theorem}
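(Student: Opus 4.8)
The plan is to run Gaussian elimination on $-A$ and track a dichotomy at each step, exactly in the spirit of Lemmas~\ref{lem:GE1} and~\ref{lem:GE2}, then use Corollary~\ref{cor:fullrank} to close the trichotomy. First I would apply Gaussian elimination to $-A$, producing a sequence $-A=-A_0,-A_1,\dots$. By Lemma~\ref{lem:GE1}(1), $\op{cone}(\op{rows}(A_i))\subseteq\op{cone}(\op{rows}(A))$ throughout, so any nonnegative or positive vector, or any inclusion of $\R^n_{<0}$, that I find for some $A_i$ transfers back to $A$. By Lemma~\ref{lem:GE1}(2), at each step either the matrix stays diffusive or some row lands in $\R^n_{\geq 0}$.

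There are two cases. \textbf{Case A: a nonzero nonnegative row appears.} If at some step a row $r$ of $A_i$ lies in $\R^n_{\geq 0}\setminus\{0\}$, I would feed the (still diffusive, in fact I can assume strongly diffusive on the relevant submatrix or argue directly) matrix into Lemma~\ref{positive_cone}: since $A$ has all off-diagonal entries strictly positive, $r$ together with a suitable row $a_{j\bullet}$ of $A$ — where $r_j>0$ — gives $r+\epsilon a_{j\bullet}\in\R^n_{>0}$ for small $\epsilon>0$, and this vector lies in $\op{cone}(\op{rows}(A))$. So in Case A we land in the second alternative, $\op{cone}(\op{rows}(A))\cap\R^n_{>0}\neq\emptyset$. (If the nonnegative row produced is actually $0$, that signals rank deficiency and is handled below.) \textbf{Case B: the elimination runs to completion with every intermediate matrix diffusive.} Then I get an upper-triangular matrix in row echelon form with negative diagonal entries. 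Here I split on whether $A$ has full rank. If $\ker A=\{0\}$, then the last row of the echelon form is $(0,\dots,0,a)$ with $a<0$; combining it with the other rows, standard back-substitution in a triangular system with negative diagonal lets me write every vector of $\R^n_{<0}$, and indeed all of $\R^n$, but the cone is only generated by nonnegative combinations — so I must argue more carefully that $\R^n_{<0}$ is reachable: using that each non-pivot entry above the diagonal is $\geq 0$ and each diagonal is $<0$, I can peel off coordinates from the bottom up, choosing nonnegative multipliers to hit any prescribed strictly negative target, which yields $\R^n_{<0}\subseteq\op{cone}(\op{rows}(A))$, the third alternative. If instead $\ker A\neq\{0\}$, then by Corollary~\ref{cor:fullrank} applied to the $(n-1)\times n$ submatrix (drop one row) — whose cone is contained in $\op{cone}(\op{rows}(A))$ — the hypothesis $\op{cone}(\op{rows})\cap\R^n_{\geq0}=\{0\}$ must fail, so $\op{cone}(\op{rows}(A))\cap\R^n_{\geq 0}\setminus\{0\}\neq\emptyset$; but then Lemma~\ref{positive_cone} upgrades this to $\op{cone}(\op{rows}(A))\cap\R^n_{>0}\neq\emptyset$, again the second alternative. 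Alternatively — and more cleanly — I would contrapose the whole statement: assume $\op{cone}(\op{rows}(A))\cap\R^n_{\geq0}=\{0\}$ (killing the first two alternatives at once, since $\ker A\cap\R^n_{\geq0}\subseteq\op{cone}(\op{rows}(A))\cap\R^n_{\geq0}$ is false — wait, $\ker$ is not inside the cone) and show $\R^n_{<0}\subseteq\op{cone}(\op{rows}(A))$.

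Let me restate the clean route, which I expect to be the actual proof. Suppose $\ker A\cap\R^n_{\geq0}=\{0\}$ and $\op{cone}(\op{rows}(A))\cap\R^n_{>0}=\emptyset$; I must deduce $\R^n_{<0}\subseteq\op{cone}(\op{rows}(A))$. By Lemma~\ref{positive_cone}, the second assumption forces $\op{cone}(\op{rows}(A))\cap\R^n_{\geq0}=\{0\}$ (otherwise a nonzero nonnegative element would give a positive one). Now run Gaussian elimination on $-A$. By Lemma~\ref{lem:GE1}(2), if the matrix ever fails to be diffusive, some row of some $A_i$ lies in $\R^n_{\geq0}$; by Lemma~\ref{lem:GE1}(1) that row is in $\op{cone}(\op{rows}(A))=\{0\}$-on-the-nonnegative-part, so the row is $0$ — and by Lemma~\ref{lem:GE2} (noting $A$ is strongly diffusive, the intermediate $A_i$ are diffusive, and $A_k$ is the first non-diffusive one) the zeroed row must be the $n$th, i.e. $A$ is rank-deficient. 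But a rank-deficient strongly diffusive matrix with $\op{cone}(\op{rows})\cap\R^n_{\geq0}=\{0\}$ contradicts Corollary~\ref{cor:fullrank} applied after dropping the dependent row (its $(n-1)$ surviving rows are still strongly diffusive off-diagonal and their cone still meets $\R^n_{\geq0}$ only at $0$, so they'd have rank $n-1$, contradiction with dependence). Hence elimination completes with all intermediates diffusive, giving an upper-triangular $-A_k$ with strictly positive diagonal and $\op{cone}(\op{rows}(A_k))\subseteq\op{cone}(\op{rows}(A))$. Finally, a lower-triangular (after transposing viewpoint) system with strictly negative diagonal and nonnegative strictly-upper entries: I claim $\op{cone}$ of its rows contains $\R^n_{<0}$. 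Proof by downward induction on coordinate $n,n-1,\dots,1$: given a target $t\in\R^n_{<0}$, the $n$th row is $(0,\dots,0,a^k_{nn})$ with $a^k_{nn}<0$, use multiplier $t_n/a^k_{nn}>0$ to fix coordinate $n$; subtract, and the residual target in coordinates $1,\dots,n-1$ — after accounting for the (nonnegative) entries the earlier rows contribute to coordinate $n$, which I reabsorb — I can arrange to stay strictly negative by first making the multipliers on lower rows large enough; recurse. I would write this induction with care about the order so that each newly chosen multiplier only affects already-settled coordinates through nonnegative entries, which I compensate by front-loading.

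The main obstacle is the last step: converting "upper-triangular with negative diagonal and nonnegative off-diagonal" into the exact statement $\R^n_{<0}\subseteq\op{cone}(\op{rows}(A))$ with \emph{nonnegative} coefficients only. Naively back-substitution wants signed coefficients; the trick is to exploit that the off-diagonal entries have a favorable sign so that overshooting in the negative direction is always correctable by adding more of a lower row. I expect the bookkeeping — choosing the multipliers bottom-up and showing the running residual stays in $\R^{\bullet}_{<0}$ — to be the one genuinely delicate part; everything else is an assembly of Lemmas~\ref{positive_cone}--\ref{cor:fullrank} already in hand.
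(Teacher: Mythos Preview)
Your overall strategy---assume the first two alternatives fail, use Lemma~\ref{positive_cone} to get $\op{cone}(\op{rows}(A))\cap\R^n_{\geq 0}=\{0\}$, then push Gaussian elimination through---matches the paper. But there is a genuine gap in your rank argument, and the step you flag as the ``main obstacle'' turns out to be a non-issue.

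\textbf{The gap.} Your claim that rank-deficiency ``contradicts Corollary~\ref{cor:fullrank}'' does not hold. That corollary, applied to the first $n-1$ rows of $A$, yields rank at least $n-1$; it does not rule out rank exactly $n-1$. Dropping a dependent row and finding that the remaining $n-1$ rows have rank $n-1$ is perfectly consistent with $\op{rank}A=n-1$, so there is no contradiction. The paper closes this gap with a different idea: assuming $\op{rank}A=n-1$, pick $0\neq v\in\ker A$. Since $\ker A\cap\R^n_{\geq 0}=\{0\}$ applies to both $v$ and $-v$, the vector $v$ has entries of both signs. Reorder coordinates so that $v_1,\dots,v_k\geq 0$ with $v_k>0$ and $v_{k+1},\dots,v_n\leq 0$; the reordered matrix is still strongly diffusive with the same cone and kernel properties. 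Run elimination; by Lemma~\ref{lem:GE2} the resulting echelon form $B$ has its $k$th row of the shape $(0,\dots,0,<0,\geq 0,\dots,\geq 0)$ with the negative entry in position $k$. A direct sign inspection gives $r_k^B\cdot v<0$, contradicting $v\in\ker A=\ker B$. Hence $A$ has full rank.

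\textbf{The non-obstacle.} Once $\op{rank}A=n$ is established, you need not stop at upper triangular and devise a bespoke back-substitution. Simply continue Gauss--Jordan elimination, clearing above each pivot as well. Lemma~\ref{lem:GE1} still applies at every step: the alternative ``some row becomes nonnegative'' is blocked because such a row would lie in $\op{cone}(\op{rows}(A))\cap\R^n_{\geq 0}=\{0\}$, contradicting full rank. So every intermediate matrix is diffusive, and the procedure terminates at a diagonal matrix with strictly negative diagonal, whose cone is all of $\R^n_{\leq 0}\supseteq\R^n_{<0}$. (Your back-substitution plan can also be made to work if you run it top-down rather than bottom-up: after using row $1$ with coefficient $c_1=t_1/b_{11}>0$ to match coordinate $1$, the residual target in coordinates $2,\dots,n$ is $t_j-c_1 b_{1j}\leq t_j<0$, and you recurse on the lower-right block. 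But continuing the elimination to diagonal form is tidier.)
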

\begin{proof}
Suppose $\op{cone}(\op{rows}(A)) \cap \R^n_{>0} = \emptyset$ and $\ker{A} \cap \R^n_{\geq 0} = \{0\}$.

By Lemma~\ref{positive_cone} we must have $\op{cone}(\op{rows}(A))\cap\R^n_{\geq 0} = \{0\}$. Hence applying Corollary~\ref{cor:fullrank} to the first $n-1$ rows of $A$, it follows that $A$ has rank at least $n-1$.

We next argue that $A$ has rank $n$. Suppose for contradiction that $A$ has rank $n-1$. Let $v\in\ker{A}$ be a non-zero vector. Since $\ker{A} \cap \R^n_{\geq 0} = \{0\}$, we know that $v\notin \R^n_{\geq 0}$. Reordering indices if necessary, we may assume without loss of generality that $v$ has the sign pattern $(\geq 0,\geq 0, \dots, \geq 0, >0, \leq 0,\leq 0,\dots, \leq 0)$, with $k$ such that $v_k > 0$ and $v_{k+1} \leq 0$. Now applying Gaussian elimination to $A$ with this ordering leads to an upper triangular matrix $B$ as in Lemma~\ref{lem:GE2} with the last row $0$. In particular, the $k$'th row $r_k^B$ of this matrix has the sign pattern $(0,0,\dots,0,<0,\geq 0,\dots,\geq 0)$ with the entry in position $k$ having the only negative sign. Since $v\in\ker{A} = \ker {B}$, we must have $r_k^B\cdot v = 0$. But from the sign analysis, $r_k^B\cdot v < 0$, which is a contradiction. Hence, $A$ has rank $n$.

The sequence of matrices produced by Gaussian elimination applied to $A$ are all diffusive. This is because $\op{cone}(\op{rows}(A)) \cap \R^n_{\geq 0} = \{0\}$ and $A$ has rank $n$, so Lemma~\ref{lem:GE1} applies inductively.
So the final matrix $B$ output by Gaussian elimination is diagonal and diffusive. That is:
\[
B = \left(
\begin{array}{cccc}
<0 &  0 & \dots      & 0 \\
0 & <0 & \dots      & 0 \\
\vdots & \vdots & <0 & 0 \\
0 & 0 & \dots &<0
\end{array}
\right)
\]
By repeated application of Lemma~\ref{lem:GE1}, ${\op{cone}(\op{rows}(B)) \subseteq \op{cone}(\op{rows}(A))}$,\\and so $\R^n_{<0}\subseteq \op{cone}(\op{rows}(A))$.
\end{proof}

\begin{remark}\label{rmk:notfarkas}
Theorem~\ref{rank-nullity} might remind some readers of Gordan's theorem~\cite[(31), pp.~95]{schrijver}, which asserts unconditionally that either $\ker{A} \cap \R^n_{\geq 0} \neq \{0\}$ or $\op{span}(\op{rows}(A)) \cap \R^n_{>0}$ is nonempty. However, the two theorems are different, and it is instructive to compare the differences.

Theorem~\ref{rank-nullity} makes a stronger assumption --- that $A$ is strongly-diffusive --- and produces a stronger implication: if the kernel is trivial then the cone (as opposed to span in Gordan's theorem) of the rows meets either the positive or the negative orthant. In particular, there are matrices that violate the stronger assumption for which the stronger conclusion is also false. For example, consider the diffusive, but not strongly diffusive, matrix
\[
\left(
\begin{array}{ccc}
-1 & 0 & 0\
\\0 & -1 & 2\
\\0 & 1 & -1
\end{array}
\right).
\] The conclusion of Theorem~\ref{rank-nullity} is false for this matrix. This is because the kernel is zero. The cone of the rows does not contain any strictly positive or strictly negative vector, as can be easily verified.
\end{remark}

\section{The Main Result}\label{sec:main}

\begin{lemma}\label{closure}
Let $G=(\SS,\RR)$ be a positive reaction network. Let $T\subseteq\SS$ be a minimal siphon. Then for all $i\in T$, the closure $\op{Cl}(\{i\}\cup \SS\setminus T) = \SS$.
\end{lemma}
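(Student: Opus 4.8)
The plan is to argue by contradiction, exploiting the minimality of the siphon $T$ together with the characterization (Remark~\ref{Rmk:1}.\ref{closedsiphon}) that a set is a siphon iff its complement is closed. Fix $i\in T$ and set $C := \op{Cl}(\{i\}\cup(\SS\setminus T))$; this is a closed set, and I want to show $C=\SS$. Since $C$ is closed, its complement $\SS\setminus C$ is a siphon by Remark~\ref{Rmk:1}.\ref{closedsiphon}. Moreover $C$ contains $\SS\setminus T$, so $\SS\setminus C \subseteq T$. Also $i\in C$, so $i\notin \SS\setminus C$, which means $\SS\setminus C$ is a proper subset of $T$.

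Now I would invoke minimality of $T$: a minimal siphon is minimal among all nonempty siphons, so the only siphon properly contained in $T$ is the empty set. Since $\SS\setminus C$ is a siphon properly contained in $T$ (it omits $i$), we must have $\SS\setminus C = \emptyset$, i.e.\ $C = \SS$. That is exactly the claim. One small point to check is that the empty set genuinely counts as a siphon and that "minimal siphon" is taken among \emph{nonempty} siphons — the Notation block in the excerpt says maximal/minimal are taken among nonempty sets with the property, so a minimal siphon has no nonempty siphon strictly inside it, and the vacuous siphon condition holds for $\emptyset$. Hence $\SS \setminus C$, being a siphon strictly smaller than the minimal siphon $T$, is forced to be empty.

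The main (and really only) obstacle is making sure the logical chain $C$ closed $\Rightarrow$ $\SS\setminus C$ a siphon $\Rightarrow$ $\SS\setminus C \subsetneq T$ $\Rightarrow$ $\SS\setminus C = \emptyset$ is airtight, in particular that the inclusion $\SS\setminus C \subseteq T$ is \emph{strict}: this is where $i\in C$ is used, and it is the reason the lemma removes a single species $i$ from $T$ rather than all of $T$. Note that positivity of $G$ is not actually needed for this argument as I have set it up; it is presumably stated for uniformity with the surrounding development, or used implicitly if one wants $\op{Cl}$ built only from supports of chemical/positive complexes. I would not expect any computation here — the proof is essentially a two-line application of the closed/siphon duality plus minimality.
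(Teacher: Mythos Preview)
Your proof is correct and is essentially identical to the paper's argument: set $C=\op{Cl}(\{i\}\cup(\SS\setminus T))$, observe that $\SS\setminus C$ is a siphon (complement of a closed set) properly contained in $T$ since $i\in C$, and invoke minimality of $T$ to conclude $\SS\setminus C=\emptyset$. Your side remarks about the empty siphon and the irrelevance of positivity are also accurate.
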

\begin{proof}
Note that $\SS\setminus T\subsetneq \{i\}\cup \SS\setminus T \subseteq \op{Cl}(\{i\}\cup \SS\setminus T)$. Therefore,
the complement $\SS\setminus \op{Cl}(\{i\}\cup \SS\setminus T) \subsetneq T$. But $\SS\setminus \op{Cl}(\{i\}\cup \SS\setminus T)$ is the complement of a closed set, and hence must be a siphon by Remark~\ref{Rmk:1}.\ref{closedsiphon}. Since $T$ is a minimal siphon, a siphon properly contained in $T$ can only be the empty set. Hence, $\op{Cl}(\{i\}\cup \SS\setminus T) = \SS$.
\end{proof}

\begin{lemma}\label{minimal-diffusive}
Let $G=(\SS,\RR)$ be a positive reaction network. Suppose $T\subseteq\SS$ is a minimal siphon that is not self-replicable. Then there exists a strongly diffusive $T\times T$ matrix $A$ such that $\op{cone}(\op{rows}(A))\subseteq\op{cone}(\op{rows}(\Gamma_T))$ and $\ker{A} = \ker{\Gamma_T}$.
\end{lemma}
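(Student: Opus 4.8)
The plan is to build the matrix $A$ by starting from $\Gamma_T$ and repeatedly replacing a row by a non-negative combination of rows, in a way that both stays inside $\op{cone}(\op{rows}(\Gamma_T))$ and preserves the kernel, until every off-diagonal entry of the resulting $T\times T$ matrix is strictly positive and every diagonal entry is strictly negative. The hypothesis that $T$ is not self-replicable is exactly what we need to get the right sign structure: by Definition~\ref{def:siphon}.\ref{def:sr}, unwinding through Lemma~\ref{lem:reactionpathway}, the failure of self-replicability says that the cone $\op{cone}(\op{rows}(\Gamma_T))$ contains no vector that is strictly positive on all of $T$, i.e.\ $\op{cone}(\op{rows}(\Gamma_T))\cap \R^T_{>0}=\emptyset$. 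That is the obstruction to any single row (or cone element) being positive, and it forces the diagonal entries, once we arrange them, to be negative: if some diagonal-dominant combination failed to be negative on its own coordinate we could push it into $\R^T_{>0}$.

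The construction proceeds coordinate by coordinate. Fix $i\in T$. The key input is Lemma~\ref{closure}: since $T$ is a minimal siphon, $\op{Cl}(\{i\}\cup \SS\setminus T)=\SS$. Unwinding the definition of closure, this means there is a sequence of reactions $y^{(1)}\ra y'^{(1)},\dots,y^{(m)}\ra y'^{(m)}$ starting from the ``food set'' $\{i\}\cup\SS\setminus T$ that successively produces every species; restricting attention to the coordinates in $T$, summing the reaction vectors of this sequence (with appropriate positive multiplicities, as in the dilution/reaction-pathway formalism) yields a vector in $\op{cone}(\op{rows}(\Gamma_T))$ whose support is contained in $T$, is $\ge 0$ off coordinate $i$ — wait, more carefully: I want a vector $r_i\in\op{cone}(\op{rows}(\Gamma_T))$ that is strictly positive in every coordinate $j\in T\setminus\{i\}$. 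The closure statement gives reactions that produce each such $j$ from a set not yet containing it; summing the $T$-restricted reaction vectors of a full production sequence gives a combination that is positive on $T\setminus\{i\}$ (each such species gets produced at some step, and earlier/later steps only add non-negative amounts if we are careful about sign cancellation — here is where one has to be slightly delicate and possibly take a large multiple of later productions, or argue that species in $T\setminus\{i\}$, once produced, can be kept from being net-consumed). Having obtained, for each $i\in T$, such a vector $r_i$ with $(r_i)_j>0$ for all $j\neq i$ in $T$, assemble the matrix $A$ with rows $r_i$. By construction $\op{cone}(\op{rows}(A))\subseteq \op{cone}(\op{rows}(\Gamma_T))$, and the off-diagonal entries of $A$ are strictly positive. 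The diagonal entry $(r_i)_i$ must be $<0$: if it were $\ge 0$ then $r_i\in\R^T_{\geq 0}\setminus\{0\}$, so by Lemma~\ref{positive_cone} (applicable since $A$ has strictly positive off-diagonal entries) $\op{cone}(\op{rows}(A))\cap\R^T_{>0}\ne\emptyset$, hence $\op{cone}(\op{rows}(\Gamma_T))\cap\R^T_{>0}\ne\emptyset$, contradicting non-self-replicability of $T$. Thus $A$ is strongly diffusive.

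It remains to arrange $\ker A=\ker\Gamma_T$. The inclusion $\ker\Gamma_T\subseteq\ker A$ is automatic since each row of $A$ lies in $\op{cone}(\op{rows}(\Gamma_T))\subseteq\op{Span}(\op{rows}(\Gamma_T)) = (\ker\Gamma_T)^\perp$. For the reverse inclusion one needs $\op{rank} A=\op{rank}\Gamma_T$, equivalently $\op{Span}(\op{rows}(A))=\op{Span}(\op{rows}(\Gamma_T))$. This is the step I expect to be the main obstacle: the vectors $r_i$ produced from the closure argument are positive combinations chosen for their sign pattern, not for spanning, so there is no reason a priori that $m=|T|$ of them span the full row space. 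The fix is to enlarge: take $A$ to have rows $\{r_i : i\in T\}$ together with a generating set of $\op{rows}(\Gamma_T)$ each perturbed by adding a small positive multiple of some $r_i$ to make all their off-diagonal entries strictly positive without changing the span (adding an element of the cone keeps us in the cone; the span only grows, and it cannot exceed $\op{Span}(\op{rows}(\Gamma_T))$) — then drop the negative-sign issue on the diagonal by the same Lemma~\ref{positive_cone} argument as above applied row-by-row. One subtlety: the statement asks for a $T\times T$ (square) matrix, so after ensuring the span and sign conditions with possibly more than $|T|$ rows one must reduce back to $|T|$ rows; this can be done by taking suitable positive combinations (averages) of the extra rows with the $r_i$, which preserves strong diffusivity, keeps the cone inclusion, and — by choosing the combination generically among positive coefficients — preserves the rank. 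I would organize the final write-up as: (1) translate non-self-replicability into $\op{cone}(\op{rows}(\Gamma_T))\cap\R^T_{>0}=\emptyset$; (2) use Lemma~\ref{closure} to produce the $r_i$; (3) check strong diffusivity via Lemma~\ref{positive_cone}; (4) handle the rank/span equality by the perturb-and-recombine argument.
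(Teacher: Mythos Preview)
Your proposal follows the same overall approach as the paper: use Lemma~\ref{closure} to build, for each $i\in T$, a row $r_i\in\op{cone}(\op{rows}(\Gamma_T))$ that is strictly positive on $T\setminus\{i\}$; then invoke non-self-replicability together with Lemma~\ref{positive_cone} to force the diagonal entries negative. Two places where the paper's execution is cleaner than your sketch:

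\emph{Off-diagonal positivity.} You worry about sign cancellation when summing reaction vectors along a closure chain. The paper sidesteps this entirely by working with reaction-pathway \emph{endpoints}: from $\op{Cl}(\{i\}\cup\SS\setminus T)=\SS$ one extracts a single pathway $y(i)\rpath_G y'(i)$ with $\supp y(i)=\{i\}\cup(\SS\setminus T)$ and $\supp y'(i)=\SS$, and sets $r_i:=(y'(i)-y(i))\big|_T$. For $j\in T\setminus\{i\}$ one has $y(i)_j=0$ and $y'(i)_j>0$, so $(r_i)_j>0$ immediately, with no bookkeeping of intermediate consumptions.

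\emph{Kernel equality.} Your add-rows-then-recombine plan is vague at the reduction step (collapsing back to a square matrix by positive combinations need not preserve strong diffusivity and rank simultaneously), and in any case is heavier than needed. The paper instead notes that since $A$ is strongly diffusive and $\op{cone}(\op{rows}(A))\cap\R^T_{\geq 0}=\{0\}$ (the latter from non-self-replicability via Lemma~\ref{positive_cone}), Corollary~\ref{cor:fullrank} already forces $\op{rank}A\geq |T|-1$. Hence either $\ker A=\ker\Gamma_T$ and we are done, or $\ker\Gamma_T=\{0\}$ and some row $g$ of $\Gamma_T$ lies outside $\op{span}(\op{rows}(A))$; replacing the last row of $A$ by $(\text{last row})+\epsilon g$ for small $\epsilon>0$ yields a strongly diffusive $\tilde A$ of full rank with $\op{cone}(\op{rows}(\tilde A))\subseteq\op{cone}(\op{rows}(\Gamma_T))$ and $\ker\tilde A=\{0\}=\ker\Gamma_T$.
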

\begin{proof}
By Lemma~\ref{closure}, we have $\op{Cl}(\{i\}\cup \SS\setminus T) = \SS$ for every $i\in T$. Hence for every $i \in T$, there exists a reaction pathway $y(i) \rpath_G y'(i)$ such that $\supp(y(i))=i\cup \SS\setminus T$ and $\supp(y'(i))=\SS$. Define the matrix $A=(y'(i)_j-y(i)_j)_{T\times T}$.

To see that $A$ is strongly diffusive, first note that $y'(i)_j-y(i)_j > 0$ for all $i,j\in T$ such that $i\neq j$ since $y(i)_j = 0$. Next note that if there exists $i\in T$ such that $y'(i)_i-y(i)_i\geq 0$, then Lemma~\ref{positive_cone} implies that $T$ is self-replicable. Hence $y'(i)_i-y(i)_i < 0$ for all $i\in T$.

Since the rows of $A$ correspond to reaction pathways, Lemma~\ref{lem:reactionpathway} implies that\ \\${\op{cone}(\op{rows}(A))\subseteq\op{cone}(\op{rows}(\Gamma_T))}$.

It is clear that $\ker{\Gamma_T}\subseteq\ker{A}$. Suppose the containment is strict. Then there exists a row of $\Gamma_T$ that is not in the span of the rows of $A$. In particular $A$ does not have full rank, so by Corollary~\ref{cor:fullrank}, $A$ must have rank $n-1$ and the first $n-1$ rows of $A$ are linearly independent. Further, $\Gamma_T$ has a row that is not in the span of the rows of $A$. Adding $\epsilon$ times this row of $\Gamma_T$ to the last row of $A$, where $\epsilon > 0$ is sufficiently small, gives a new matrix $\tilde{A}$ that is full rank (hence $\ker{\Gamma_T} = \ker{\tilde{A}}=\{0\}$), strongly-diffusive, and such that $\op{cone}(\op{rows}(\tilde{A}))\subseteq\op{cone}(\op{rows}(\Gamma_T))$.
\end{proof}

\begin{theorem}\label{minimal-siphon}
Let $G=(\SS,\RR)$ be a positive reaction network. Let $\emptyset\neq T\subseteq S$. Then
\begin{enumerate}
\item\label{drsrthencrit} If $T$ is either drainable or self-replicable then it is critical.
\item If $T$ is a minimal critical siphon then it is either drainable or self-replicable.
\item\label{wrequivcds} If $G$ is consistent then the following are equivalent:
  \begin{enumerate}
  \item $T$ is critical.
  \item $T$ is drainable.
  \item $T$ is self-replicable.
  \end{enumerate}
\end{enumerate}
\end{theorem}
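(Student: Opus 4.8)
The plan is to treat the three parts in order, with three translations doing all the work. Theorem~\ref{critical-equivalence} rewrites criticality of $T$ as $\ker\Gamma_T\cap\R^T_{\geq 0}=\{0\}$. A bridge lemma rewrites self-replicability of $T$ as $\op{cone}(\op{rows}(\Gamma_T))\cap\R^T_{>0}\neq\emptyset$ and drainability of $T$ as $\op{cone}(\op{rows}(\Gamma_T))\cap\R^T_{<0}\neq\emptyset$. And the Convex Rank--Nullity Theorem~\ref{rank-nullity}, applied to the matrix supplied by Lemma~\ref{minimal-diffusive}, provides the trichotomy that drives part~(2). The forward direction of the bridge lemma is immediate from Lemma~\ref{lem:reactionpathway}: the restriction to the coordinates $T$ of the vector $y'-y$ of a reaction pathway $y\rpath_G y'$ is a nonnegative combination of the rows of $\Gamma_T$. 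For the converse, given a strictly positive nonnegative combination of the rows of $\Gamma_T$, one first perturbs the coefficients to be rational and rescales them to nonnegative integers (using that $\R^T_{>0}$ is open), and then realizes the corresponding nonnegative-integer combination of reaction vectors as a reaction pathway by firing each reaction the prescribed number of times starting from a complex $z$ whose coordinates are large enough that every intermediate state dominates the reactant of the next reaction fired; this is the concatenation-of-pathways device already implicit in the proof of Lemma~\ref{minimal-diffusive}. The drainability statement is the self-replicability statement applied to $G^{op}$.

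For part~(1), suppose $T$ is self-replicable via $y\rpath_G y'$ with $(y'-y)_i>0$ for every $i\in T$. If $T$ were not critical there would be a positive conservation law $w\cdot x$ with $\emptyset\neq\supp(w)\subseteq T$; since $w$ is perpendicular to the stoichiometric subspace and $y'-y$ lies in it (Lemma~\ref{lem:reactionpathway}), we would get $0=w\cdot(y'-y)=\sum_{i\in\supp(w)}w_i(y'-y)_i>0$, a contradiction, so $T$ is critical. The drainable case follows by reversing the strict inequality, or by applying the above to $G^{op}$ and noting that $G$ and $G^{op}$ share the same stoichiometric subspace, hence the same positive conservation laws, so ``critical'' means the same for both.

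For part~(2), let $T$ be a minimal critical siphon. By Remark~\ref{Rmk:1}.\ref{rmk:mincritsiph} it is minimal among all siphons, so if $T$ is not self-replicable then Lemma~\ref{minimal-diffusive} furnishes a strongly diffusive $T\times T$ matrix $A$ with $\op{cone}(\op{rows}(A))\subseteq\op{cone}(\op{rows}(\Gamma_T))$ and $\ker A=\ker\Gamma_T$. Apply Theorem~\ref{rank-nullity} to $A$. Its first alternative, $\ker A\cap\R^T_{\geq 0}\neq\{0\}$, forces $\ker\Gamma_T\cap\R^T_{\geq 0}\neq\{0\}$ and hence, by Theorem~\ref{critical-equivalence}, contradicts criticality of $T$. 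Its second alternative, $\op{cone}(\op{rows}(A))\cap\R^T_{>0}\neq\emptyset$, forces $\op{cone}(\op{rows}(\Gamma_T))\cap\R^T_{>0}\neq\emptyset$ and hence, by the bridge lemma, contradicts non-self-replicability of $T$. So the third alternative must hold: $\R^T_{<0}\subseteq\op{cone}(\op{rows}(A))\subseteq\op{cone}(\op{rows}(\Gamma_T))$, and the bridge lemma now gives that $T$ is drainable.

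For part~(3), assume $G$ is consistent, so some $v\in\R^\RR_{>0}$ satisfies $v\Gamma=0$; restricting the columns to $T$ shows that a strictly positive combination of the rows of $\Gamma_T$ vanishes, whence $-\gamma\in\op{cone}(\op{rows}(\Gamma_T))$ for every row $\gamma$ of $\Gamma_T$, so this cone is closed under negation and therefore equals $\op{span}(\op{rows}(\Gamma_T))$. Granting this, ``critical $\Rightarrow$ self-replicable and drainable'' follows: if $T$ is critical then $\ker\Gamma_T\cap\R^T_{\geq 0}=\{0\}$ (Theorem~\ref{critical-equivalence}), so the Transposition theorem of Gordan --- exactly as in the proof of $(2)\Rightarrow(3)$ of Theorem~\ref{critical-equivalence} --- produces a vector in $\op{span}(\op{rows}(\Gamma_T))\cap\R^T_{>0}=\op{cone}(\op{rows}(\Gamma_T))\cap\R^T_{>0}$, and its negative then lies in $\op{cone}(\op{rows}(\Gamma_T))\cap\R^T_{<0}$; by the bridge lemma $T$ is both self-replicable and drainable. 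Together with part~(1) this gives the three equivalences. The conceptually hard ingredients, Theorem~\ref{rank-nullity} and Lemma~\ref{minimal-diffusive}, are being quoted, so the only real obstacle in assembling the proof is the converse half of the bridge lemma --- checking that a large enough starting complex keeps every intermediate firing legal, so that abstract membership in $\op{cone}(\op{rows}(\Gamma_T))$ genuinely upgrades to an honest reaction pathway; the rest is the observation that the three cases of the Convex Rank--Nullity Theorem are precisely ``$T$ not critical,'' ``$T$ self-replicable,'' and ``$T$ drainable.''
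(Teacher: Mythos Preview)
Your proposal is correct and follows the paper's approach in all three parts: part~1 via the conservation-law contradiction, part~2 via Lemma~\ref{minimal-diffusive} and the Convex Rank--Nullity Theorem~\ref{rank-nullity}, and part~3 via consistency together with Gordan. Your explicit ``bridge lemma'' (upgrading membership in $\op{cone}(\op{rows}(\Gamma_T))$ to an honest reaction pathway by perturbing coefficients to rationals, clearing denominators, and firing from a sufficiently large starting complex) spells out a step the paper uses tacitly in parts~2 and~3, and your observation that consistency forces $\op{cone}(\op{rows}(\Gamma_T))=\op{span}(\op{rows}(\Gamma_T))$ is a clean repackaging of the paper's explicit coefficient-shifting $(na_{y\to y'}+e_{y\to y'})$ argument, but neither constitutes a genuinely different route.
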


\begin{proof}
1. For contradiction, suppose $T$ is not critical. Then there exists a positive conservation law $w.x$ with support in $T$. If $T$ is self-replicable then there exists a reaction pathway $y \rpath_G y'$ such that $y'_i - y_i > 0$ for all $i\in T$. In particular, $w.y' > w.y$, contradicting Lemma~\ref{lem:reactionpathway} that $w.x$ is invariant along $G$-reaction pathways. If $T$ is drainable, we get a similar contradiction from $y'_i - y_i < 0$ for all $i\in T$ . Hence, $T$ is critical.\
\\
\\2. Let $T$ be a minimal critical siphon. Since $T$ is critical, by Theorem~\ref{critical-equivalence}, we have $\ker{\Gamma_T} \cap \R^T_{\geq 0} = \{0\}$. Since $T$ is a minimal siphon, Lemma~\ref{minimal-diffusive} gives a strongly diffusive matrix $A$ such that $\op{cone}(\op{rows}(A))\subseteq \op{cone}(\op{rows}(\Gamma_T))$ and $\ker{A} = \ker{\Gamma_T}$.

Applying Theorem~\ref{rank-nullity} to the matrix $A$, either
\[
{\op{cone}(\op{rows}(\Gamma_T))\cap \R^n_{>0}\supseteq\op{cone}(\op{rows}(A))\cap \R^n_{>0}\neq\emptyset}
\]
so that $T$ is self-replicable, or
\[
{\op{cone}(\op{rows}(\Gamma_T))\cap \R^n_{<0}\supseteq \op{cone}(\op{rows}(A))\cap \R^n_{<0}\neq\emptyset}
\]
so that $T$ is drainable.
\\
\\3. The implication $(b)\Rightarrow (a)$ is true by part 1. Now suppose that $G$ is consistent. We will show $(a)\Rightarrow(c)$ and $(c)\Rightarrow (b)$.\
\\
Since $G$ is consistent, for each reaction $y\to y'\in\RR$ there is a number $a_{y\ra y'}> 0$  such that
\begin{align}
\sum_{y\ra y'\in\RR}a_{y\ra y'}(y'- y) = 0.
\end{align}
In particular, for all $i \in T$, we have
\begin{align}\label{1}
\sum_{y\ra y'\in\RR}a_{y\ra y'}(y'-y)_i = 0.
\end{align}
Since $\R^S_{>0}$ is an open set, we may assume the $a_{y\ra y'}$ are rational numbers. Normalizing denominators, we may assume the $a_{y\ra y'}$ are integers.
\\
\\$(a)\Rightarrow (c)$:
Suppose $T$ is critical. We will show that $T$ is self-replicable. Towards this, let $z\in\mathbb{Z}^\SS_{\geq 0}$ be such that $z_i=0$ precisely when $i\in T$. Then $z$ is a critical point by Theorem~\ref{critical-equivalence}. From the definition of a critical point, there exist reals $e_{y\ra y'}$ such that
\begin{equation}
z+\sum_{y\ra y'\in\RR}e_{y\ra y'}(y'-y)\in\R^S_{>0}.
\end{equation}
In particular, for all $i\in T$, since $z_i=0$, we have
\begin{align}\label{3}
\sum_{y\ra y'\in\RR}e_{y\ra y'}(y'-y)_i > 0.
\end{align}
Since $\R^S_{>0}$ is an open set, we may assume the $e_{y\ra y'}$ are rational numbers. Normalizing denominators, we may assume the $e_{y\ra y'}$ are integers. Now choose strictly positive integers $f_{y\ra y'}$ such that $f_{y\ra y'}\,a_{y\ra y'}+ e_{y\ra y'} > 0$. Let $n = \max_{y\ra y'}\{f_{y\ra y'}\}$. Adding $n$ times Equation ~\ref{1} to Equation ~\ref{3} yields for all $i \in T$:

\[
\sum_{y\ra y'\in\RR}\left( n\,a_{y\ra y'}+ e_{y\ra y'}\right)(y'-y)_i > 0.
\]
Hence $T$ is self-replicable.
\\
\\$(c)\Rightarrow (b)$ Suppose $T$ is self-replicable. Then there exist positive integers $b_{y\ra y'}$ such that for all $i\in T$
\begin{equation}\label{2}
\sum_{y\ra y'\in\RR}b_{y\ra y'}(y'-y)_i >0.
\end{equation}
We choose strictly positive integers $c_{y\ra y'}$ such that $c_{y\ra y'}\,a_{y\ra y'}- b_{y\ra y'} > 0$, and define $m := \max_{y\ra y'}\{c_{y\ra y'}\}$. Subtracting Equation~\ref{2} from $m$ times Equation~\ref{1}  yields for all $i\in T$:
\[
\sum_{y\ra y'\in\RR}\left(m\,a_{y\ra y'} - b_{y\ra y'}\right)(y'-y)_i < 0.
\]
Hence $T$ is drainable.\
\end{proof}

\begin{example}\label{ex:cds}
Some critical siphons are neither self-replicable nor drainable. Consider the following network: \[X\rightarrow 2Y, \mbox{  }2X\rightarrow Y \]
In this network, the siphon $\{X,Y\}$ is critical. However, $\{X,Y\}$ is neither self-replicable nor drainable.

A self-replicable siphon need not be a minimal critical siphon. Consider the following network: \[X\rightarrow 2X, \mbox{  }2X\rightarrow 2X+Y \]
The siphon $\{X,Y\}$ is self-replicable because there exists a reaction pathway $X\rpath 2X+Y$, and therefore $\{X,Y\}$ is a critical siphon by Theorem~\ref{minimal-siphon}.\ref{drsrthencrit}. However, it is not a minimal critical siphon because $\{X\}$ is also a critical siphon.
\end{example}

\section{Persistence}

\begin{definition}
A reaction network $G = (\SS,\RR)$ is
{\em persistent} iff for every choice $k:\RR\to\R_{>0}$ of specific rates, for all initial conditions $x(0)\in\R^\SS_{>0}$, for the mass-action ODE system
\[
	\dot{x}(t) = \sum_{y\ra y'\in \RR} k_{y\ra y'}(y'- y) x(t)^y ,
\]
the omega-limit set $\omega(x)$ does not meet $\partial\R^\SS_{\geq 0}$, i.e., for every increasing sequence $0 < t_1 < t_2 <\dots$ of positive real numbers tending to $+\infty$,
the limit $\displaystyle\lim_{i\to\infty} x(t_i) \notin\partial\R^\SS_{\geq 0}$.
\end{definition}

The definition of persistence is standard in dynamical systems, and we have adapted it here for mass-action systems.

\begin{theorem}\label{thm:persistence}
Let $G=(\SS,\RR)$ be a positive reaction network. If $G$ has no drainable siphons then $G$ is persistent.
\end{theorem}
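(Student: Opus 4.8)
The plan is to prove the contrapositive: if $G$ is not persistent, then $G$ has a drainable siphon. So suppose that for some rates $k$ and some $x(0)\in\R^\SS_{>0}$ the mass-action solution $x(\cdot)$ admits an increasing sequence $t_i\to\infty$ with $x(t_i)\to z$ for some $z\in\partial\R^\SS_{\geq 0}$. Set $T:=\{j\in\SS\mid z_j=0\}$, which is nonempty. I would show (i) that $T$ is a siphon and (ii) that $T$ is drainable; combined with the hypothesis, this is the contradiction sought.

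For (i), observe that mass-action solutions stay in $\R^\SS_{>0}$, so $z$ lies in the $\omega$-limit set $\omega(x)\subseteq\R^\SS_{\geq 0}$. Because the vector field is polynomial, the solution through $z$ exists on an interval $(-\varepsilon,\varepsilon)$, and continuous dependence gives $x(t_i+t)=\phi_t(x(t_i))\to\phi_t(z)$ for $|t|<\varepsilon$; since $t_i+t\to\infty$ and $x(t_i+t)\in\R^\SS_{>0}$, this shows $\phi_t(z)\in\omega(x)\subseteq\R^\SS_{\geq 0}$ for all such $t$. Hence for $i\in T$ the smooth function $g(t):=\phi_t(z)_i$ is nonnegative on $(-\varepsilon,\varepsilon)$ with $g(0)=0$, so $g'(0)=0$. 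But $g'(0)=\dot x_i|_{x=z}=\sum_{y\to y'}k_{y\to y'}(y'_i-y_i)z^y$, in which every reaction with $i\in\supp(y)$ contributes $0$ (as $z_i=0$) and every reaction with $i\notin\supp(y)$ contributes a nonnegative term; if $T$ failed the siphon condition there would be a reaction $y\to y'$ with $\supp(y)\cap T=\emptyset$ (so $z^y>0$) and some $i\in\supp(y')\cap T$ (so $y_i=0<y'_i$), forcing $g'(0)>0$ — a contradiction. So $T$ is a siphon. (One also sees that $z\in x(0)+H$ meets $\R^\SS_{>0}$, so $T$ is critical by Theorem~\ref{critical-equivalence}, but I will not need this.)

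For (ii), integrate the ODE: $x(t_i)-x(0)=\sum_{y\to y'}\bigl(\int_0^{t_i}k_{y\to y'}x(s)^y\,ds\bigr)(y'-y)$ is a nonnegative combination of reaction vectors, so $x(t_i)-x(0)\in\op{cone}(\op{rows}(\Gamma))$; this cone is finitely generated, hence closed, so $v:=z-x(0)$ lies in it, with $v_j=-x(0)_j<0$ for every $j\in T$. Rounding up a large multiple of the coefficients of $v$ yields nonnegative integers $n_{y\to y'}$ with $w:=\sum_{y\to y'}n_{y\to y'}(y'-y)$ still strictly negative on $T$ (the round-off error per coordinate is bounded by $\sum_{y\to y'}\|y'-y\|_\infty$, $v_j<0$ strictly, and $T$ is finite, so one multiplier works for all $j\in T$). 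Listing the reactions with multiplicity $n_{y\to y'}$ and applying them one at a time starting from $c_0:=N\cdot(1,\dots,1)$ realizes $w$ as a reaction pathway $c_0\rpath_G c_0+w$ — each step is a legitimate $G$-dilution once $N$ exceeds a bound depending only on the reactant complexes and on the partial sums of the reaction vectors used — and then $(c_0+w-c_0)_j=w_j<0$ for all $j\in T$, i.e.\ $T$ is drainable in the sense of Definition~\ref{def:siphon}. Since $T$ is also a siphon, this contradicts the hypothesis, completing the proof.

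The step I expect to be the crux is (i), that the vanishing set of a boundary $\omega$-limit point is a siphon: it is the only place where the dynamics genuinely enter, via (local) forward/backward invariance of $\omega(x)$ and positivity of mass-action flows. This siphon-extraction fact is essentially classical in the persistence literature (cf.\ \cite{Angeli}) but deserves a careful argument; everything else is linear algebra (closedness of finitely generated cones) together with the routine bookkeeping of converting a conic combination of reaction vectors into an honest reaction pathway, which is just the converse direction of the reasoning in Lemma~\ref{lem:reactionpathway}.
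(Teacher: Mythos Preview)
Your argument is correct and follows essentially the same route as the paper's proof: contrapositive, extract the siphon $T$ from a boundary $\omega$-limit point (the paper simply cites \cite{Angeli} for this step, while you supply the argument), then show $T$ is drainable by observing that $z-x(0)$ lies in the closed cone of reaction vectors and converting this conic combination into an actual reaction pathway. Your version is in fact slightly more careful than the paper about the pathway construction---choosing the large starting point $N(1,\dots,1)$ so that every intermediate dilution is legitimate---whereas the paper rationalizes the coefficients, clears denominators, and asserts the pathway from $\alpha$ to $Lz$ without explicitly checking the intermediate states.
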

\begin{proof}
We will prove the contrapositive. Suppose $G$ is not persistent. We will first identify a siphon in $G$, and then show that siphon to be ``drainable.'' There exist a point ${x_0\in\R^\SS_{>0}}$ and specific rates $k:\RR\to \R_{>0}$ such that the solution $x(t)$ to the mass-action ODE system with initial conditions $x(0) = x_0$ violates persistence, so that the omega-limit set $\omega(x)$ meets the boundary $\partial\R^\SS_{\geq 0}$ of the positive orthant. Let $z\in\omega(x)\cap\partial
R^\SS_{\geq 0}$. From \cite[Proposition~1]{Angeli}, we know that the set
\[
T_z:=\{i\mid z_i = 0\}
\]
is a siphon. (Strictly speaking, \cite[Proposition~1]{Angeli} is in the context of chemical reaction networks, but the proof applies without change to positive reaction networks.)

We now show that $T_z$ is drainable. Define $\op{Cone}_\RR := \op{Cone}(\{y'-y\mid y\ra y'\in\RR\})$.

We first claim that $z\in x(0) + \op{Cone}_\RR$. By definition of $\omega$-limit point, there exists an increasing sequence $0 < t_1 < t_2 <\dots$ of positive real numbers tending to $+\infty$ such that $z=\displaystyle\lim_{i\to\infty} x(t_i)$. Further, writing the integral of the mass-action ODE system as a Riemann sum,
\[
x(t) = x(0) + \lim_{h\to 0} \sum_{j=0}^{t/h-1} \sum_{y\ra y'\in\RR} h\cdot k_{y\ra y'}\cdot(y'-y)\cdot (x(jh))^y,
\]
so that
\[
z = x(0) + \lim_{i\to\infty} \lim_{h\to 0} \sum_{j=0}^{t_i/h-1} \sum_{y\ra y'\in\RR} h\cdot k_{y\ra y'}\cdot(y'-y)\cdot (x(jh))^y.
\]
Each of the partial sums
\[z_{i,h} = x(0) + \sum_{j=0}^{t_i/h-1} \sum_{y\ra y'\in\RR} h\cdot k_{y\ra y'}\cdot(y'-y)\cdot (x(jh))^y\]
is in $\op{Cone}_\RR$ because $h,k_{y\ra y'}, (x(jh))^y>0$; the limit exists by choice of $t_i$; and $\op{Cone}_\RR$ is a closed set by Definition~\ref{def:cone}. Hence, $z\in x(0) + \op{Cone}_\RR$.

So there exist non-negative real numbers $a_{y\ra y'}\in\R_{\geq 0}$ such that
\[
z = x(0) + \sum_{y\ra y'\in\RR} a_{y\ra y'} (y'-y).
\]
Rewriting this, we get $x(0) = z - \sum_{y\ra y'\in\RR} a_{y\ra y'} (y'-y)\in\R^\SS_{>0}$ by assumption. Since $\R^\SS_{>0}$ is open, we can slightly perturb the numbers $a_{y\ra y'}$ if necessary so that they lie in the non-negative rational numbers, while ensuring that $z - \sum_{y\ra y'\in\RR} a_{y\ra y'} (y'-y)$ remains in the positive orthant. Now clearing denominators of the numbers $a_{y\ra y'}$ by multiplying by a large positive integer $L$, we get that $\alpha := L z - \sum_{y\ra y'\in\RR} b_{y\ra y'} (y'-y)\in\R^\SS_{>0}$, where $b_{y\ra y'} = L a_{y\ra y'}$ are all non-negative integers. In other words, there is a $G$-reaction pathway from $\alpha\in\R^\SS_{>0}$ to $Lz$, showing that $T_{Lz} = T_z$ is drainable.
\end{proof}

\begin{remark}
Let $G=(\SS,\RR)$ be a positive reaction network. If $T_1\subseteq\SS$ is a drainable siphon and $T_2\subseteq T_1$ is a siphon then $T_2$ is drainable by Remark~\ref{Rmk:1}.\ref{subset-drainable}. In particular, every minimal drainable siphon is a minimal siphon. Hence to show that a positive reaction network is persistent, it is sufficient to show that its {\em minimal} siphons are not drainable.
\end{remark}

The next remark provides a vague indication of future directions for attack on the Global Attractor Conjecture prompted by the results in this paper.

\begin{remark}\label{Rmk:compete}
Many reaction networks with drainable siphons are nevertheless persistent. One natural question to explore is precisely when do minimal drainable siphons cause extinction? If a  minimal drainable siphon is simultaneously also self-replicable, then intuitively it seems as if there is a competition between extinction and autocatalytic growth. Let us say, very informally, that the self-replicable nature of a minimal critical siphon {\em dominates} the drainable nature if this competition is won by autocatalytic growth, so that there is no extinction of the species in $T$.

We will leave this notion of ``domination'' vague at the moment; making it precise in combinatorial terms, and linking the combinatorics to the dynamics, are two of the challenges in taking this work forward. In this regard, also see notions of draining, sustaining, and domination in \cite[Proposition~6.26]{geometricgac} which are cognate to the present notions of drainable, self-replicable, and domination respectively.

Note that in weakly-reversible networks, every drainable siphon is also self-replicable by Theorem~\ref{minimal-siphon}.\ref{wrequivcds}. Perhaps the definition of domination, when provided, will allow us to reformulate the persistence conjecture thus: in weakly-reversible networks, for minimal critical siphons, the self-replicable nature dominates the drainable nature.
\end{remark}

\section{Catalytic sets and Catalysis}\label{sec:catalysis}%

The goal of this section is to give an elementary proof to Theorem~\ref{connect}, which was first proved in \cite{gopalkrishnan} with algebraic geometric techniques. We recall some definitions from \cite{gopalkrishnan}.

\begin{definition}[Definition~3.1 in \cite{gopalkrishnan}]
Let $G=(\SS,\RR)$ be a weakly-reversible chemical reaction network. The {\em event-graph} $\overline{G}$ of $G$ is a directed graph with vertices $V(\overline{G}) = \ZZ^\SS_{\geq 0}$ and edges the $G$-dilutions $y\ra y'$ where both $y,y'\in\ZZ^\SS_{\geq 0}$.
\end{definition}

\begin{remark}
In \cite{gopalkrishnan}, the event-graph was defined in terms of the monomials $\prod_{i\in \SS} x_i^{y_i}$ and $\prod_{i\in\SS}x_i^{y_i'}$ because the techniques in that paper came from algebraic geometry. Here we have rephrased the same definition in terms of dilutions. The equivalence is immediate from comparing the two definitions.
\end{remark}

The next lemma bridges positive and chemical reaction networks. Specifically, it makes the point that in chemical reaction networks, it is sufficient to consider complexes that have non-negative integer coordinates, as opposed to non-negative real coordinates.

\begin{lemma}\label{lem:poschem}
Let $G=(\SS,\RR)$ be a chemical reaction network. If $y\rpath_G y'$ is a reaction pathway then
\begin{enumerate}
\item $y-\lfloor y\rfloor = y'- \lfloor y'\rfloor$.
\item There is a reaction pathway $\lfloor y \rfloor \rpath_G \lfloor y'\rfloor$.
\end{enumerate}
\end{lemma}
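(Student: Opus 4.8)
The plan is to use the fact that $G$ is chemical in an essential way: every $G$-reaction $u\ra u'$ has $u'-u\in\ZZ^\SS$, and hence every dilution $z\ra z'$ of $u\ra u'$ also satisfies $z'-z = u'-u\in\ZZ^\SS$. Writing the reaction pathway as a chain $y = y(0)\ra y(1)\ra\dots\ra y(k) = y'$ of $G$-dilutions, each difference $y(j{+}1)-y(j)$ is an integer vector, so by telescoping $y(j)-y(0)\in\ZZ^\SS$ for every $j$, and in particular $y'-y\in\ZZ^\SS$. Thus for each species $i$ the reals $y_i$ and $y'_i$ differ by an integer, hence have the same fractional part, which is exactly part (1): $y-\lfloor y\rfloor = y'-\lfloor y'\rfloor$. (When $k=0$ the pathway is the trivial one-vertex pathway and both assertions are immediate; when $k\geq 1$ every $y(j)$ is a positive complex, being reactant-plus-nonnegative, so $\lfloor y(j)\rfloor\in\NN^\SS$.)

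For part (2), set $f := y-\lfloor y\rfloor\in[0,1)^\SS$ to be the common fractional part; since $y(j)-y(0)\in\ZZ^\SS$, we have $\lfloor y(j)\rfloor = y(j)-f$ for all $j$. Now track an individual step: write $y(j) = u+w$ and $y(j{+}1) = u'+w$, where $u\ra u'\in\RR$ and $w\in\R^\SS_{\geq 0}$ is the dilution complex. Because $u$ is a chemical (integer) complex, the fractional part of $y(j)_i = u_i+w_i$ equals the fractional part of $w_i$; since that fractional part is $f_i$, we get $f = w-\lfloor w\rfloor$ coordinatewise, so $w-f = \lfloor w\rfloor\in\NN^\SS$. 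Therefore
\[
\lfloor y(j)\rfloor \;=\; y(j)-f \;=\; u + (w-f) \;=\; u + \lfloor w\rfloor, \qquad \lfloor y(j{+}1)\rfloor \;=\; y(j{+}1)-f \;=\; u' + \lfloor w\rfloor ,
\]
so $\lfloor y(j)\rfloor \ra \lfloor y(j{+}1)\rfloor$ is precisely the dilution of the $G$-reaction $u\ra u'$ by the positive complex $\lfloor w\rfloor$. Chaining these over $j=0,\dots,k-1$ yields the reaction pathway $\lfloor y\rfloor \rpath_G \lfloor y'\rfloor$.

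The one point that needs care — and where the chemical hypothesis really does its work — is the claim that the fractional part of $y(j)$ is carried entirely by the dilution complex $w$ and not by the reactant $u$; this is what makes $\lfloor w\rfloor$ a legitimate non-negative dilution complex and simultaneously keeps $\lfloor y(j)\rfloor$ and $\lfloor y(j{+}1)\rfloor$ in $\NN^\SS$. Everything else is bookkeeping: the telescoping identity $y(j)-y(0)\in\ZZ^\SS$ uses only integrality of reaction vectors, and the passage from a single step to the whole pathway is a routine concatenation.
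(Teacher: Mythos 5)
Your proof is correct and follows essentially the same route as the paper's: part (1) via integrality of reaction (hence dilution) vectors and telescoping, and part (2) by observing that the fractional part sits in the dilution complex $w$, so flooring each intermediate complex still yields a chain of $G$-dilutions. You simply make explicit the step the paper states tersely (``removing the fractional part does not affect the reactions that can occur''), by exhibiting the new dilution complex as $\lfloor w\rfloor$.
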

\begin{proof}
Since $G$ is chemical, each $G$-reaction (and hence each $G$-dilution) can only change the population by an integer vector, and fractional parts of complexes are carried around unchanged by reaction pathways, which establishes part 1.

To see part 2, suppose $y\rpath_G y'$ corresponds to the sequence of dilutions\
\\${y = y(0) \ra y(1) \ra \dots \ra y(m) = y'}$. Then by part 1, since every dilution is a reaction pathway, $y(i) - \lfloor y(i) \rfloor = y - \lfloor y \rfloor$ for all $i$. Since $G$ is chemical, removing the fractional part of a complex does not affect the reactions that can occur with that complex as source. So ${\lfloor y(0) \rfloor \ra \lfloor y(1) \rfloor \ra \dots \ra \lfloor y(m) \rfloor}$ is a sequence of dilutions, and there is a reaction pathway $\lfloor y \rfloor \rpath_G \lfloor y'\rfloor$.
\end{proof}

The next lemma restates the definitions of catalytic and strictly catalytic chemical reaction networks which were introduced in \cite{gopalkrishnan} in the language of reaction pathways. Readers who do not wish to consult \cite{gopalkrishnan} at this point may also consider the next lemma as the definition of catalytic and strictly-catalytic chemical reaction networks, and skip the proof.

\begin{lemma}\label{lem:cat}
A weakly-reversible chemical reaction network $G$ is catalytic (respectively, strictly catalytic) iff there exist complexes $y,y'\in\ZZ^S_{\geq 0}$ such that
\[
y\rpath_G y'\text{ but }{y-\min(y,y')\notrpath_G y'-\min(y,y')}
\]
(respectively, $y\rpath_G y'$ but for all $k\in \Rplus$, $k(y-\min(y,y'))\notrpath_G k(y'-\min(y,y'))$).
\end{lemma}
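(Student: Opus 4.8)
The plan is to unwind the original algebraic-geometric definitions from \cite{gopalkrishnan} and match them term-by-term with the reaction-pathway reformulation, using Lemma~\ref{lem:poschem} as the main bridge. Recall that in \cite{gopalkrishnan}, ``catalytic'' is phrased via the event-graph $\overline{G}$: roughly, $G$ is catalytic iff there is an edge in the ``event-system'' quotient — equivalently, a pair of monomials $x^y, x^{y'}$ with $x^y$ reaching $x^{y'}$ in $\overline{G}$ — but the ``reduced'' monomials (obtained by dividing out the common factor $x^{\min(y,y')}$) do not reach each other. Under the dictionary monomial $\leftrightarrow$ exponent vector, reachability in $\overline{G}$ between $x^y$ and $x^{y'}$ is exactly the existence of a reaction pathway $y \rpath_G y'$ among integer complexes, so the only thing to check is that the algebraic notion of dividing out $x^{\min(y,y')}$ corresponds to the vector operation $y \mapsto y - \min(y,y')$, which is immediate. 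So the ``catalytic'' case is essentially a translation exercise.

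First I would state explicitly, quoting the relevant definition from \cite{gopalkrishnan}, what ``catalytic'' and ``strictly catalytic'' mean in their original language, and record the correspondence between $\overline G$-paths on monomials and integer reaction pathways. Second, I would handle the subtlety that the original definition works with integer complexes (the vertices of $\overline G$ are in $\ZZ^\SS_{\geq 0}$) while a priori Definition~\ref{def:siphon}'s notion of reaction pathway allows real complexes; this is exactly where Lemma~\ref{lem:poschem} enters. Specifically, if $y \rpath_G y'$ with $y, y'$ integer, the pathway stays integer at every step because $G$ is chemical and the starting complex is integer, so the real-valued notion and the $\overline G$-notion coincide on integer complexes; conversely an integer pathway is automatically a real pathway. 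Third, for the ``strictly catalytic'' case I would note that the original definition in \cite{gopalkrishnan} asks that \emph{no} positive-integer power $k$ of the reduced monomials reach each other; here $k$ ranges over $\Rplus$ rather than $\NN$, but since scaling a reaction pathway between integer complexes by a positive integer again gives a pathway between integer complexes (concatenate the pathway with its dilutions appropriately), and conversely a real scaling that produces an integer pathway can be reduced to an integer scaling via Lemma~\ref{lem:poschem}, the quantifier over $\Rplus$ and the quantifier over $\NN_{\geq 1}$ give the same condition when $y - \min(y,y')$ and $y' - \min(y,y')$ are integer vectors.

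The main obstacle I expect is precisely this last point: reconciling the quantifier $k \in \Rplus$ in the statement with the integer-power quantifier in \cite{gopalkrishnan}, and making sure that ``$k(y - \min(y,y')) \rpath_G k(y' - \min(y,y'))$'' is well-behaved when $k$ is irrational — a priori $k$ times an integer vector need not be integer, so one must invoke Lemma~\ref{lem:poschem}.\ref{lem:poschem}(1) to conclude that the fractional parts of $k(y-\min(y,y'))$ and $k(y'-\min(y,y'))$ agree along any such pathway, and hence that floor-ing reduces to an integer instance. Once that is pinned down, the equivalence is a matter of carefully citing the definitions and applying Lemma~\ref{lem:poschem} in both directions; no genuinely new mathematical content is needed beyond what the earlier lemmas provide.
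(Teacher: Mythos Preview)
Your proposal is correct and follows essentially the same route as the paper: translate between the event-graph/monomial language of \cite{gopalkrishnan} and reaction pathways via the identification $\gcd(x^y,x^{y'})=x^{\min(y,y')}$, and then reconcile the quantifier $k\in\Rplus$ with the integer-power quantifier of \cite[Definition~6.1]{gopalkrishnan} by using Lemma~\ref{lem:poschem} to pass to a rational $k=p/q$ and then repeating the pathway $q$ times to obtain an integer scaling. The paper's argument is slightly leaner in that it does not separately worry about real-versus-integer pathways for the non-strict catalytic case (for a chemical network an integer starting complex forces every intermediate complex to be integer, so Lemma~\ref{lem:poschem} is only genuinely needed for the strictly catalytic direction), but the substance is the same.
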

\begin{proof}
Since edges in $\overline{G}$ correspond to dilutions, complexes $y,y'$ are path-connected in $\overline{G}$ iff $y\rpath_G y'$. Next note that $\gcd(x^y,x^{y'}) = x^{\min(y,y')}$. The catalytic part of the lemma now follows from Definition~3.2 in \cite{gopalkrishnan}. To see the strictly catalytic part, first observe that by Definition~6.1 in \cite{gopalkrishnan}, $G$ is strictly catalytic iff there exist complexes $y,y'\in\ZZ^S_{\geq 0}$ such that $y\rpath_G y'$ and $k(y-\min(y,y'))\notrpath_G k(y'-\min(y,y'))$ for all $k\in \ZZ_{\geq 1}$.

It remains to show that the definition does not change if we quantify over all $k\in\Rplus$. One direction is easy. For the other direction, we proceed by contrapositive. Suppose $y,y'\in\ZZ^S_{\geq 0}$ are such that $y\rpath_G y'$ and there exists $k\in\Rplus$ such that $k(y-\min(y,y'))\rpath_G k(y'-\min(y,y'))$. By dropping any fractional part if necessary as in Lemma~\ref{lem:poschem}, we may assume that  $k(y-\min(y,y'))$ is an integer vector, so that $k$ is rational. Let $p,q$ be positive integers such that $k=\frac{p}{q}$. Then we get a reaction pathway $kq(y-\min(y,y'))\rpath_G kq(y'-\min(y,y'))$ by applying the above reaction pathway $q$ times. Since $kq=p$ is a positive integer, we are done.
\end{proof}

The intuition behind the definition of ``catalytic'' is that $\min(y,y')$ prevents the conversion of $y - \min(y,y')$ to $y'- \min(y,y')$. ``Strictly catalytic'' indicates that the conversion is not possible even with multiple copies of $y-\min(y,y')$.

Note that we have only defined what it means for a network to be (strictly) catalytic, but have said nothing about which species may be identified as the catalysts. The next definition makes precise this identification of the catalytic species, in the process also extending the concept of catalysis beyond weakly-reversible chemical reaction networks, to all positive reaction networks.

\begin{definition}[Catalytic set]\label{def:catalyticset}
Let $G=(\SS,\RR)$ be a positive reaction network. 
A set $T\subseteq\SS$ is {\em catalytic} iff there is a reaction pathway $y\rpath_G y'$ such that
\begin{enumerate}
\item supp($min(y,y'))=T$.
\item $y-\min(y, y')\notrpath_G y'-\min(y,y'))$.
\end{enumerate}
It is {\em strictly-catalytic} if in addition $k(y-\min(y, y'))\notrpath_G k(y'-\min(y,y'))$ for every $k\in\Rplus$.
\end{definition}

We immediately obtain the next theorem as an easy consequence.

\begin{theorem}\label{catset-catalytic}
Let $G$ be a weakly reversible chemical reaction network. Then $G$ has a catalytic (respectively, strictly-catalytic) set $T\subseteq\SS$ iff $G$ is catalytic (respectively, strictly-catalytic).
\end{theorem}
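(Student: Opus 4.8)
The plan is to show that Theorem~\ref{catset-catalytic} is essentially a bookkeeping translation between Definition~\ref{def:catalyticset} (catalytic \emph{set}) and Lemma~\ref{lem:cat} (catalytic \emph{network}). Recall that Lemma~\ref{lem:cat} already says $G$ is catalytic iff there exist $y,y'\in\ZZ^\SS_{\geq 0}$ with $y\rpath_G y'$ but $y-\min(y,y')\notrpath_G y'-\min(y,y')$, and Definition~\ref{def:catalyticset} says $G$ has a catalytic set iff there exist such $y,y'$ with the \emph{additional} property that $\supp(\min(y,y'))=T$ for the specified $T$. So the statement amounts to: \emph{some} witnessing pair $(y,y')$ exists iff a witnessing pair exists \emph{with} a prescribed support for its minimum --- but since $T$ is existentially quantified in the theorem, the only content is that \emph{if} a witnessing pair exists then the set $T:=\supp(\min(y,y'))$ is a catalytic set. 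That direction is immediate.

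First I would prove the forward direction ($G$ catalytic $\implies$ $G$ has a catalytic set). By Lemma~\ref{lem:cat} pick $y,y'\in\ZZ^\SS_{\geq 0}$ with $y\rpath_G y'$ and $y-\min(y,y')\notrpath_G y'-\min(y,y')$. Set $T:=\supp(\min(y,y'))$. Then conditions (1) and (2) of Definition~\ref{def:catalyticset} hold by construction, so $T$ is a catalytic set and in particular $G$ has one. For the strictly-catalytic version, start instead from the strictly-catalytic clause of Lemma~\ref{lem:cat}, which gives the same pair with $k(y-\min(y,y'))\notrpath_G k(y'-\min(y,y'))$ for all $k\in\Rplus$; the same $T$ then witnesses strict-catalyticity of the set.

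Next I would prove the reverse direction ($G$ has a catalytic set $\implies$ $G$ catalytic). If $T\subseteq\SS$ is a catalytic set, Definition~\ref{def:catalyticset} hands us $y,y'$ with $y\rpath_G y'$ and $y-\min(y,y')\notrpath_G y'-\min(y,y')$; forgetting $T$ entirely, this is exactly the witnessing pair required by Lemma~\ref{lem:cat}, so $G$ is catalytic. The strictly-catalytic case is identical with the extra clause carried along. One small hygiene point worth checking: Definition~\ref{def:catalyticset} is stated for all positive reaction networks, but here $G$ is a weakly-reversible \emph{chemical} network, so the complexes $y,y'$ appearing can be taken integer --- by Lemma~\ref{lem:poschem}, passing to floors preserves both the reaction pathway $y\rpath_G y'$ and the non-reachability of $y-\min(y,y')$ from (to) $y'-\min(y,y')$, and $\min(\lfloor y\rfloor,\lfloor y'\rfloor)=\lfloor\min(y,y')\rfloor$ has the same support as $\min(y,y')$ since fractional parts agree by part~1 of that lemma --- so Lemma~\ref{lem:cat} applies verbatim.

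I do not expect a genuine obstacle here; the theorem is flagged in the text as ``an easy consequence,'' and the only thing requiring a moment's care is reconciling the real-coordinate setting of Definition~\ref{def:catalyticset} with the integer-coordinate setting of Lemma~\ref{lem:cat}, which Lemma~\ref{lem:poschem} was set up precisely to handle. If anything, the mildly delicate point is verifying that dropping fractional parts does not accidentally \emph{create} a reaction pathway $\lfloor y\rfloor-\min(\cdot)\rpath_G \lfloor y'\rfloor-\min(\cdot)$ where none existed upstairs --- but this too follows from Lemma~\ref{lem:poschem}, read in the contrapositive, since any such downstairs pathway would lift (it already has integer endpoints) and combine with the invariance of fractional parts to contradict $y-\min(y,y')\notrpath_G y'-\min(y,y')$.
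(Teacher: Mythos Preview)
Your proposal is correct and matches the paper's proof, which simply reads ``Immediate from Lemma~\ref{lem:poschem}, Lemma~\ref{lem:cat} and Definition~\ref{def:catalyticset}.'' One minor nitpick: your parenthetical claim that $\lfloor\min(y,y')\rfloor$ has the same support as $\min(y,y')$ can fail (e.g.\ $y_i=0.5$, $y'_i=1.5$), but it is also unnecessary for the theorem --- since the fractional parts of $y$ and $y'$ agree, one has $y-\min(y,y')=\lfloor y\rfloor-\min(\lfloor y\rfloor,\lfloor y'\rfloor)$ \emph{exactly}, so the non-reachability condition transfers on the nose without any lifting argument or appeal to supports.
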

\begin{proof}
Immediate from Lemma~\ref{lem:poschem}, Lemma~\ref{lem:cat} and Definition~\ref{def:catalyticset}.
\end{proof}

This theorem allows us to extend the previous definition of catalytic and strictly-catalytic networks from the class of weakly-reversible chemical reaction networks to all positive reaction networks.

\begin{definition}
A positive reaction network $G=(\SS,\RR)$ is {\em catalytic} (respectively {\em strictly catalytic}) iff there exists a catalytic (respectively, strictly catalytic) subset $T\subseteq\SS$.
\end{definition}

\begin{example}
For the reaction network:
\begin{center}
$3X \rightarrow 0$\\
$X+2Y \rightarrow 2Y$,\\
\end{center}
the set $\{Y\}$ is catalytic because there exists a reaction pathway $X+2Y\rpath_G 2Y$, but there is no reaction pathway $X \rpath_G 0$. However, it is not strictly catalytic, because there is a reaction pathway $3X\rpath_G 0$.

For the reaction network:
\begin{center}
$X+Y \rightarrow Y$\\
$X+2Y \rightarrow 2Y$,\\
\end{center}
the set $\{Y\}$ is strictly catalytic here because there exists a reaction pathway ${X+2Y\rpath_G 2Y}$, and for all $k\in\Rplus$, $kX \notrpath_G 0$, since the absence of $Y$ turns off all reactions.
\end{example}

The next theorem shows how the notions of self-replicable, strictly-catalytic, and siphon are related.

\begin{theorem}\label{thm:autocatalytic}
Let $G=(\SS,\RR)$ be a positive reaction network. If $T\subseteq\SS$ is a self-replicable set that is not strictly-catalytic then no nonempty subset of $T$ is a siphon.
\end{theorem}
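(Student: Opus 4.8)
The plan is to argue by contraposition on the structure of the witnesses: suppose $T$ is self-replicable and contains a nonempty siphon $T_0 \subseteq T$; I will construct from the self-replication pathway a pathway showing $T$ is strictly-catalytic. Since $T$ is self-replicable, there is a reaction pathway $y \rpath_G y'$ with $(y'-y)_i > 0$ for all $i \in T$. The key idea is that since $T_0$ is a siphon and $(y'-y)_i>0$ for $i\in T_0$, the reactant complex $y$ of \emph{every} dilution step in the pathway must already contain a species of $T_0$; more carefully, by Lemma~\ref{lem:siphonrpath} the complex $y$ itself contains a species of $T_0$. I would like to leverage this so that the ``catalyst'' set $\supp(\min(y,y'))$ can be arranged to include a species of $T_0$, and that removing $\min(y,y')$ actually breaks the pathway.

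First I would reduce to a convenient normal form for the witness pathway. Since the relevant conditions are about signs of $y'-y$ and about reachability, and $\op{Cone}_\RR$ is a convex cone, I can rescale and translate: for a large integer $N$ and any positive complex $w$, the dilution $Ny + w \rpath_G Ny' + w$ still exhibits strictly positive increase on all of $T$. Choosing $w$ and $N$ appropriately, I can assume $\min(y,y')$ has support exactly $T$ (adding $w$ supported on $T\setminus\supp(\min(y,y'))$ if needed, and using that $(y'-y)_i>0$ forces $\min(y,y')_i = y_i$; one must check the support of $\min$ doesn't accidentally grow outside $T$, which holds if $w$ is supported inside $T$). So WLOG $\supp(\min(y,y')) = T$, and condition (1) of strict-catalyticity's first requirement is met; what remains is to show $k(y - \min(y,y')) \notrpath_G k(y' - \min(y,y'))$ for every $k \in \Rplus$.

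The heart of the argument is this last non-reachability claim, and I expect it to be the main obstacle. Set $u := y - \min(y,y')$ and $u' := y' - \min(y,y')$. Note $\supp(u) \subseteq \SS \setminus T$: indeed for $i \in T$ we arranged $\min(y,y')_i = y_i$ so $u_i = 0$. Now suppose for contradiction that $ku \rpath_G ku'$ for some $k > 0$. Since $(y'-y)_i > 0$ for $i \in T$ and $\min$ subtracts the same amount from $y$ and $y'$, we have $(u'-u)_i = (y'-y)_i > 0$ for all $i \in T$, so in particular $u'$ contains every species of $T$, hence contains a species of the siphon $T_0$. By Lemma~\ref{lem:siphonrpath} applied to the pathway $ku \rpath_G ku'$, the complex $ku$ — equivalently $u$ — must contain a species of $T_0 \subseteq T$. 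But $\supp(u) \cap T = \emptyset$, a contradiction. Therefore $ku \notrpath_G ku'$ for all $k \in \Rplus$, so $T$ is strictly-catalytic, contradicting the hypothesis. Hence no nonempty subset of $T$ is a siphon.

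The one subtlety to watch carefully is the normal-form reduction: I must ensure that after scaling by $N$ and adding the padding complex $w$, the self-replication property (strict positivity of the difference on all of $T$) is preserved — which is automatic since the difference $y'-y$ is unchanged by adding the same $w$ to both sides, and scaling by $N$ scales it positively — and that $\supp(\min(Ny+w, Ny'+w))$ equals exactly $T$ rather than something larger. For the latter, for $i\notin T$ one needs $\min(Ny+w,Ny'+w)_i = 0$, i.e. $w_i = 0$ and $\min(y,y')_i=0$ for $i\notin T$; the first is our choice of $w$, and the second holds because if $i\notin T$ and $\min(y,y')_i>0$ then $i\in\supp(\min(y,y'))$ which after padding we want to equal $T$. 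Actually it is cleaner to first pick the witness pathway and then simply \emph{define} $T' := \supp(\min(y,y'))$; one checks $T\subseteq T'$ always (since $\min(y,y')_i = y_i > 0$ for $i\in T$ as $y'_i>y_i\geq$ the amount needed — wait, we need $y_i>0$; but $y_i \geq \min(y,y')_i$ and $(y'-y)_i>0$ gives $y'_i > y_i$, and $\min(y,y')_i = y_i$, which could be $0$). So the honest fix is the padding argument above; I would present it by adding to both $y$ and $y'$ a complex $w$ with $\supp(w) = T \setminus \{\,i : y_i > 0\,\}$, small enough integer coefficients, which makes $\min$ have support exactly $T$ while leaving $\supp(u)\cap T = \emptyset$ intact. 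That bookkeeping is the only routine-but-necessary part; the rest follows the siphon/reachability dichotomy cleanly.
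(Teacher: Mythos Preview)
Your core argument coincides with the paper's: from a self-replication witness $y\rpath_G y'$ one has $(y-\min(y,y'))_i=0$ and $(y'-\min(y,y'))_i>0$ for every $i\in T$, so any pathway $k(y-\min(y,y'))\rpath_G k(y'-\min(y,y'))$ would, via Lemma~\ref{lem:siphonrpath}, rule out nonempty siphons inside $T$. The paper runs this directly; you run the contrapositive, but the substance is the same.

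The gap is in your normal-form reduction. Padding both endpoints by $w\ge 0$ gives $\min(y+w,y'+w)=\min(y,y')+w$, so padding can only \emph{enlarge} the support of $\min$; it can never remove a species $j\notin T$ that already has $\min(y_j,y'_j)>0$. Such $j$ can certainly occur --- for instance if $j$ appears on both sides of every reaction used in the pathway. Concretely, take $T=\{X\}$ in the single-reaction network $X+Z\to 2X+Z$: here $\{X\}$ is self-replicable and a siphon, yet every nontrivial pathway has $Z$ present in both endpoints, so $\supp(\min(y,y'))=\{X,Z\}$ no matter how you pad. Your assertion ``which makes $\min$ have support exactly $T$'' is therefore false, and without condition~(1) of Definition~\ref{def:catalyticset} you have only shown that $\supp(\min(y,y'))$ --- whatever set that is --- is strictly-catalytic, not that $T$ is. (You have in fact put your finger on a genuine subtlety: the paper's own proof passes from ``$T$ is not strictly-catalytic'' to ``there exists $k$ with $k(y-\min(y,y'))\rpath_G k(y'-\min(y,y'))$'' without checking that the self-replication witness satisfies $\supp(\min(y,y'))=T$, so the same issue is present there.)
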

\begin{proof}
Suppose $T$ is a self-replicable set that is not strictly-catalytic. Since $T$ is self-replicable, there exists a reaction pathway $y\rpath_G y'$ such that $y'_i - y_i > 0$ for all $i\in T$. Since $T$ is not strictly-catalytic, there exists $k\in\Rplus$ such that $k(y - \min(y,y')) \rpath_G k(y'-\min(y,y'))$. Note that for all $i\in T$, $k(y_i - \min(y_i,y_i')) = 0$ whereas $k(y_i'-\min(y_i,y_i')) > 0$. Hence no nonempty subset of $T$ is a siphon by Lemma~\ref{lem:siphonrpath}.\
\end{proof}

\begin{example}\label{counter}
The converse of the above theorem is false. Consider the following reaction network:
\begin{center}
$X \rightarrow 2X$\\
$Y \rightarrow X+Y$\\
\end{center}
In this example, no non-empty subset of $X$ is a siphon. Notice that $X$ is self-replicable because of the reaction pathway $X \rightarrow 2X$ that increases the number of $X$. But $X$ is strictly catalytic as we have a reaction pathway $X \rightarrow 2X$, but no reaction pathway $0 \rightarrow kX$ for every $k\in\R_{>0}$.
\end{example}

\begin{corollary}\label{cor:srtosc}
Let $G=(\SS,\RR)$ be a positive reaction network. If $T\subseteq\SS$ is a self-replicable siphon then $T$ is a strictly-catalytic set.
\end{corollary}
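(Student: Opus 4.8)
The plan is to deduce Corollary~\ref{cor:srtosc} directly from Theorem~\ref{thm:autocatalytic} by contraposition. Suppose $T\subseteq\SS$ is a self-replicable siphon. If $T$ were not strictly-catalytic, then Theorem~\ref{thm:autocatalytic} would apply, since $T$ is self-replicable and not strictly-catalytic, and would conclude that no nonempty subset of $T$ is a siphon. But $T$ itself is a nonempty subset of $T$ (a siphon by hypothesis is nonempty, or we take $T$ nonempty by convention as in the paper's use of ``set of species''), and $T$ is a siphon, giving a contradiction. Hence $T$ must be strictly-catalytic.

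The only subtlety to check is the nonemptiness of $T$: Theorem~\ref{thm:autocatalytic} speaks of ``no nonempty subset of $T$ is a siphon,'' so we need $T\neq\emptyset$ to use $T$ itself as the offending subset. In the paper's conventions siphons are considered among nonempty sets (see the \textbf{Notation} on maximal/minimal sets), so this is automatic; if one wished to be fully careful, one could note that the empty set carries no content as a siphon here and restrict attention to $T\neq\emptyset$ without loss of generality. I do not anticipate any genuine obstacle — all the work has already been done in Theorem~\ref{thm:autocatalytic}, and this corollary is purely a logical repackaging: ``self-replicable $+$ not strictly-catalytic $\Rightarrow$ no nonempty sub-siphon'' is logically equivalent to ``self-replicable $+$ has a nonempty sub-siphon $\Rightarrow$ strictly-catalytic,'' and a self-replicable siphon is in particular a self-replicable set with a nonempty sub-siphon (namely itself).

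So the proof is essentially one line:

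\begin{proof}
If $T$ is a nonempty siphon, then $T$ is a nonempty subset of $T$ that is a siphon. Hence by the contrapositive of Theorem~\ref{thm:autocatalytic}, a self-replicable set $T$ that admits such a subset cannot fail to be strictly-catalytic; that is, $T$ is strictly-catalytic.
\end{proof}
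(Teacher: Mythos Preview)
Your proposal is correct and matches the paper's approach exactly: the paper states Corollary~\ref{cor:srtosc} immediately after Theorem~\ref{thm:autocatalytic} with no proof, treating it as the obvious contrapositive you spell out. Your remark on nonemptiness is a fair technical caveat (the empty set is vacuously a self-replicable siphon but not strictly-catalytic), though the paper implicitly restricts to nonempty $T$ throughout.
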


\begin{corollary}\label{cor:crittosc}
Let $G=(\SS,\RR)$ be a weakly-reversible positive reaction network. If $T\subseteq\SS$ is a critical siphon then $T$ is a strictly-catalytic set.
\end{corollary}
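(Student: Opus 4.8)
The plan is to derive Corollary~\ref{cor:crittosc} by combining Corollary~\ref{cor:srtosc} with Theorem~\ref{minimal-siphon}. The key observation is that a weakly-reversible network is consistent by Remark~\ref{wr-cons}, so Theorem~\ref{minimal-siphon}.\ref{wrequivcds} applies and tells us that for consistent networks the notions ``critical,'' ``drainable,'' and ``self-replicable'' all coincide. Thus the hypothesis that $T$ is a critical siphon upgrades, in the weakly-reversible setting, to $T$ being a self-replicable siphon.

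Concretely, I would argue as follows. Let $G=(\SS,\RR)$ be a weakly-reversible positive reaction network and let $T\subseteq\SS$ be a critical siphon. By Remark~\ref{wr-cons}, $G$ is consistent. Hence by Theorem~\ref{minimal-siphon}.\ref{wrequivcds}, since $T$ is critical, $T$ is self-replicable. So $T$ is a self-replicable siphon, and Corollary~\ref{cor:srtosc} immediately gives that $T$ is a strictly-catalytic set.

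There is essentially no obstacle here: the corollary is a two-line deduction chaining together results already established. The only point requiring a moment's care is making sure the hypotheses of Theorem~\ref{minimal-siphon}.\ref{wrequivcds} are met — specifically that weak reversibility implies consistency, which is exactly the content of Remark~\ref{wr-cons} — and that Corollary~\ref{cor:srtosc} is stated for arbitrary positive reaction networks (it is), so no weak-reversibility hypothesis is lost in applying it. One could alternatively bypass Corollary~\ref{cor:srtosc} and invoke Theorem~\ref{thm:autocatalytic} directly, but since a critical siphon $T$ is in particular a siphon (a nonempty subset of itself that is a siphon), Theorem~\ref{thm:autocatalytic} forces $T$ to be strictly-catalytic once it is known to be self-replicable; this is just the content of Corollary~\ref{cor:srtosc} restated. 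Either route works, and the cleanest writeup is the three-sentence chain: weakly-reversible $\Rightarrow$ consistent $\Rightarrow$ (critical $\iff$ self-replicable) $\Rightarrow$ strictly-catalytic via Corollary~\ref{cor:srtosc}.
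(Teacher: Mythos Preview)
Your proposal is correct and follows exactly the same route as the paper's own proof, which simply cites Theorem~\ref{minimal-siphon}.\ref{wrequivcds}, Remark~\ref{wr-cons}, and Corollary~\ref{cor:srtosc} in a single line. Your writeup just unpacks that chain explicitly.
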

\begin{proof}
By Theorem~\ref{minimal-siphon}.\ref{wrequivcds}, Remark~\ref{wr-cons} and Corollary~\ref{cor:srtosc}
\end{proof}

From Corollary~\ref{cor:crittosc} and Theorem~\ref{catset-catalytic}, we have the following theorem.
\begin{theorem}\label{thm:connectstrict}
Weakly-reversible chemical reaction networks with critical siphons are strictly catalytic.
\end{theorem}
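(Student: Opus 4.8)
The plan is to derive Theorem~\ref{thm:connectstrict} purely as a corollary of the two results cited immediately before it, namely Corollary~\ref{cor:crittosc} and Theorem~\ref{catset-catalytic}. Let $G=(\SS,\RR)$ be a weakly-reversible chemical reaction network with a critical siphon $T\subseteq\SS$. The first step is to observe that $G$, being chemical, is in particular a positive reaction network, so that the hypotheses of Corollary~\ref{cor:crittosc} are met once we know $G$ is weakly-reversible and positive; applying that corollary to the critical siphon $T$ yields that $T$ is a strictly-catalytic set in the sense of Definition~\ref{def:catalyticset}.

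The second step is to transfer this from the level of sets to the level of networks. Since $G$ is a weakly-reversible chemical reaction network and $T$ is a strictly-catalytic set, Theorem~\ref{catset-catalytic} (the ``respectively, strictly-catalytic'' half) tells us that $G$ is strictly catalytic as a network. This completes the argument: weakly-reversible chemical reaction networks possessing a critical siphon are strictly catalytic.

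I would also include a brief remark unwinding what this chain actually uses, since that is where the content lies: Corollary~\ref{cor:crittosc} is itself built on Theorem~\ref{minimal-siphon}.\ref{wrequivcds} (weak reversibility implies consistency by Remark~\ref{wr-cons}, and for consistent networks ``critical'' is equivalent to ``self-replicable''), together with Corollary~\ref{cor:srtosc} (a self-replicable siphon is strictly-catalytic, via Theorem~\ref{thm:autocatalytic}). So the real work has already been done upstream, and the only subtlety here is making sure the quantifier ``strictly catalytic'' matches on both sides of Theorem~\ref{catset-catalytic}.

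The main obstacle, such as it is, is not mathematical but definitional bookkeeping: one must check that the notion of strictly-catalytic \emph{set} used in Corollary~\ref{cor:crittosc} (Definition~\ref{def:catalyticset}) is exactly the one whose existence Theorem~\ref{catset-catalytic} converts into $G$ being strictly catalytic, and that the passage through Lemma~\ref{lem:poschem} and Lemma~\ref{lem:cat} (which underlie Theorem~\ref{catset-catalytic}) is legitimate for a general chemical network with possibly non-integer-supported critical siphons. Since Lemma~\ref{lem:poschem} already handles fractional parts and Theorem~\ref{catset-catalytic} is stated for arbitrary weakly-reversible chemical networks, there is nothing further to verify, and the proof is genuinely a two-line invocation.
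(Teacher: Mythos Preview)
Your proposal is correct and matches the paper's own proof exactly: the paper simply states that Theorem~\ref{thm:connectstrict} follows from Corollary~\ref{cor:crittosc} and Theorem~\ref{catset-catalytic}, which is precisely the two-step invocation you give. Your additional unwinding of the upstream dependencies is accurate and helpful, though not required.
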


Since every strictly catalytic network is also catalytic we also obtain an elementary proof for the following theorem first proved in \cite{gopalkrishnan}.

\begin{theorem}\label{connect}
Weakly-reversible chemical reaction networks with critical siphons are  catalytic.
\end{theorem}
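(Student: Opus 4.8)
The plan is to derive Theorem~\ref{connect} as an immediate corollary of the results just established, with essentially no new work. The strategy is to chain together Theorem~\ref{thm:connectstrict} with the trivial observation that strict catalysis is a strengthening of catalysis. Concretely, first I would invoke Theorem~\ref{thm:connectstrict}: any weakly-reversible chemical reaction network $G$ possessing a critical siphon is strictly catalytic. Then I would appeal to the definitional fact that every strictly-catalytic network is catalytic: this is immediate from Lemma~\ref{lem:cat}, since the strictly-catalytic condition (for all $k\in\Rplus$, $k(y-\min(y,y'))\notrpath_G k(y'-\min(y,y'))$) specializes at $k=1$ to the catalytic condition $y-\min(y,y')\notrpath_G y'-\min(y,y')$. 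Hence $G$ is catalytic, which is the claim.

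If one instead wanted to unwind the chain to its roots rather than cite Theorem~\ref{thm:connectstrict} directly, the steps would be: (i) given a critical siphon $T$ in the weakly-reversible (hence consistent, by Remark~\ref{wr-cons}) network $G$, apply Theorem~\ref{minimal-siphon}.\ref{wrequivcds} to conclude that $T$ is self-replicable; (ii) apply Corollary~\ref{cor:srtosc} (equivalently, Theorem~\ref{thm:autocatalytic} together with the fact that $T$ is itself a siphon) to conclude $T$ is a strictly-catalytic set; (iii) apply Theorem~\ref{catset-catalytic} to pass from ``$G$ has a strictly-catalytic set'' to ``$G$ is strictly catalytic''; (iv) weaken strict catalysis to catalysis as above. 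Either route gives a short, purely formal deduction.

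I anticipate no real obstacle here — the theorem is explicitly flagged in the text as following ``immediately'' from Corollary~\ref{cor:crittosc} and Theorem~\ref{catset-catalytic}, and the only content not already packaged is the one-line implication that strict catalysis implies catalysis. The sole point requiring a moment's care is making sure the hypotheses line up: Corollary~\ref{cor:crittosc} and Theorem~\ref{minimal-siphon} are stated for \emph{positive} reaction networks, while Theorem~\ref{connect} is about \emph{chemical} networks, so one should note that chemical networks are in particular positive (Definition~\ref{def:rnprop}), and that weak reversibility of a chemical network yields consistency via Remark~\ref{wr-cons}. With these trivial compatibility remarks in place, the proof is a two-sentence citation.

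\begin{proof}
By Theorem~\ref{thm:connectstrict}, a weakly-reversible chemical reaction network with a critical siphon is strictly catalytic. By Lemma~\ref{lem:cat}, every strictly catalytic network is catalytic: if $y\rpath_G y'$ and for all $k\in\Rplus$ one has $k(y-\min(y,y'))\notrpath_G k(y'-\min(y,y'))$, then in particular taking $k=1$ gives $y-\min(y,y')\notrpath_G y'-\min(y,y')$. Hence the network is catalytic.
\end{proof}
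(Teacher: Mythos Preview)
Your proposal is correct and matches the paper's own argument essentially verbatim: the paper derives Theorem~\ref{connect} in one line from Theorem~\ref{thm:connectstrict} together with the observation that every strictly catalytic network is catalytic. Your added remark that the $k=1$ specialization in Lemma~\ref{lem:cat} yields the implication is exactly the content of that observation.
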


Note that \cite{gopalkrishnan} made a connection between saturated binomial ideals and catalysis, and used the algebraic geometry of binomial ideals to obtain this result.

\begin{example}
The converse of the above theorem is false. Consider the reaction network in Example~\ref{counter}. In this case, there is a reaction pathway $Y \rpath X+Y$, but there is no reaction pathway $0\rpath_G X$. Hence the set $Y$ is catalytic. But the above network does not possess any self-replicable siphons. The species $X$ is not a siphon because of the second reaction. The sets $\{Y\}$ and $\{X,Y\}$ are not critical because of the conservation law $y$.
\end{example}

\section{Related Work}
A reaction network admits various choices for dynamics whose appropriateness depends on the context of the application. We briefly describe some of the possible choices.
\begin{bullets}

\item {\em Mass-action kinetics}~\cite{WaageGuldberg} prescribes a way for going from a reaction network and a collection of reaction ``specific rates'' to a system of ordinary differential equations.  The variables of this ODE system are concentrations of the reaction species as functions of time, and the right-hand sides are polynomials in these variables. Many of the mathematical results about chemical reaction networks have been proved in the setting of mass-action kinetics. Such models also appear in the study of power-law systems and S-systems from biochemical systems theory~\cite{bio_systems}. Quadratic dynamical systems~\cite{RabinovichSinclairWigderson} studied in computer science to model genetic algorithms are mathematically cognate to bimolecular mass-action systems. Many models in population biology like predator-prey systems~\cite{Lotka, Volterra} are described as mass-action systems.

\item The {\em chemical master equation} describes the evolution of species populations as a continuous-time Markov process, given a ``specific propensity'' for each reaction. It is a physically more accurate choice for dynamics than mass-action kinetics, since it also takes into account stochastic effects. It is especially relevant when modeling reactions where the species population sizes are small, and the stochastic effects can cause a significant deviation from mass-action behavior.

\item A common choice of dynamics in the computer science community, especially in the study of Petri nets, is {\em non-deterministic dynamics}. The key question here is that of ``reachability:'' whether there is a path to go from one population vector to another, by applying the reactions in any sequence.

\item In practice, the rates and propensities of reactions are often not easily obtainable by experiment. Indeed, they may not even be constants, but may vary in time, or across assays. This can be captured by {\em mass-action differential inclusions}~\cite{projectionpaper}, where the reaction rates simultaneously take all values in a fixed interval bounded away from $0$ and $+\infty$.

\item The assumption of mass-action can be relaxed to allow other functional forms for the right-hand sides of the ODEs. Many of the results first proved in the setting of mass-action can be extended to these more general dynamical systems~\cite{banaji}.

\item Stochastic and deterministic {\em reaction-diffusion} systems combine spatial effects of diffusion with some local dynamical model of the progress of reactions. They are relevant in settings where the assumption that the reactants are well-mixed no longer holds.
\end{bullets}

Reaction networks have been studied in chemistry for a long time~\cite{WaageGuldberg}, but they have also been studied beyond chemistry. In 1962, Carl Petri~\cite{PetriThesis} applied reaction networks to the study of distributed processes in computer science, and they have come to be known in this area as ``Petri nets.'' Kauffman~\cite{Kauffman} has conjectured a key role for autocatalytic reaction networks in the origin of life.

In recent times, advances in systems and synthetic biology and molecular computation have given a new impetus to the study of reaction networks. It is now commonplace wisdom in biology that the sophisticated behavior of living systems emerges from the dynamics of their reaction networks. As high-throughput techniques uncover more details about these biochemical reaction networks, the questions of inference and control of dynamical properties of the system from this combinatorial data becomes more compelling. Such efforts are already underway in the areas of Flux Balance Analysis~\cite{flux_balance1,flux_balance2}, and Biochemical Systems Theory~\cite{bio_systems}. It is hoped that a deep mathematical understanding of reaction networks would aid such efforts.

In the area of molecular computation, researchers have engineered proof-of-principle molecular circuits that synthesize sophisticated behavior~\cite{SeesawQianWinfree}. Interesting applications would result if these constructions could be scaled economically by two orders of magnitude. One problem with attempts at scaling has been ``retroactivity:'' composition of two reaction networks leads to changes in the behavior of each network~\cite{SontagRetroactivity}, so that the gestalt dynamics does not have the targeted behavior. To obtain a good compositional scheme for reaction networks, it may be helpful to investigate whether there are mathematically-natural ways for reaction networks to be composed, and decomposed.

The persistence conjecture~\cite{Feinberg89} is a good candidate problem for gauging progress in our understanding of the dynamics of reaction networks. In recent times, there has been a resurgence of interest in this conjecture~\cite{CNP, Pantea,Anderson11, gopalkrishnan, projectionpaper, geometricgac}, and in critical siphons~\cite{Anderson08,Anderson10,shiu,gopalkrishnan}.

Autocatalysis almost certainly plays a central role in the complexity and self-organization exhibited by living systems, and possibly also played a role in the origin of life~\cite{Kauffman}. Autocatalysis is also a feature of synthetic DNA molecular circuits that perform signal amplification~\cite{PengHairpinProgrammingBiomolecular, EntropyDrivenZhang} --- signal amplification is essential for the scaling of molecular circuit constructions.

Our notion of ``self-replicable siphons'' may be relevant in the context of Kauffman's program~\cite{Kauffman} of explaining the origin of life using {\em autocatalytic sets of reactions}. Autocatalytic sets of reactions have been studied extensively~\cite{steel,hordijk_steel,mossel_steel,hordijk_hein_steel,hordijk_kauffman_steel,steel_hordijk,hordijk_steel_kauffman,sanjay_jain,pnas_jain,varun_giri,kauffman,eigen,dyson}. The notion of self-replicable siphons~(Definition~\ref{def:sr}) appears to be cognate with the idea of autocatalytic sets, and has a certain mathematical naturalness to it.
%

\paragraph{Acknowledgements.}
We thank Jaikumar Radhakrishnan for pointing out that our proof of Theorem~\ref{rank-nullity} showed a result mildly stronger ($\R^n_{<0} \subseteq \op{cone}(\op{rows}(A))$) than what we had asserted in a previous draft ($\op{cone}(\op{rows}(A)) \cap \R^n_{<0}$ is nonempty). We thank the referees for useful comments.

\bibliographystyle{amsplain}

\providecommand{\bysame}{\leavevmode\hbox to3em{\hrulefill}\thinspace}
\providecommand{\R}{\relax\ifhmode\unskip\space\fi MR }

\end{document}